\documentclass[12pt,twoside,a4paper,leqno]{article}

\usepackage[utf8]{inputenc}
\usepackage[T1]{fontenc} 
\usepackage[english]{babel} 
\usepackage{microtype} 
\usepackage{geometry}
\usepackage{lmodern}            

\usepackage{amsmath,amsfonts,amssymb,amsthm}
\usepackage{mathtools} 

\usepackage{bm}      
\usepackage{dsfont}  
\usepackage{bbm}   
\usepackage{mathrsfs}
\usepackage[bb=px]{mathalfa}  

\usepackage[dvipsnames]{xcolor}
\usepackage{graphicx}
\usepackage{booktabs}  
\usepackage{enumitem}
\setlist[enumerate]{itemsep=0pt}
\usepackage{fancyhdr}
\usepackage{titlesec}
\usepackage{authblk}    
\usepackage{float}
\usepackage{siunitx} 

\usepackage{colortbl}

\mathtoolsset{showonlyrefs}
\usepackage[square,numbers,sort]{natbib}
\usepackage[ruled,linesnumbered]{algorithm2e}

\usepackage{tikz}
\usetikzlibrary{positioning,fit,tikzmark, calc,decorations.pathreplacing, calligraphy,arrows.meta}

\colorlet{inlinkcolor}{purple}
\colorlet{exlinkcolor}{purple}
\colorlet{citecolor}{teal!50!blue}

\usepackage[colorlinks=true,linkcolor=inlinkcolor,citecolor=citecolor,urlcolor=exlinkcolor,linktoc=page,breaklinks=true,plainpages=false,pagebackref=true,backref=page]{hyperref}
\renewcommand*{\backrefalt}[4]{%
\ifcase #1 \footnotesize{(Not cited.)}%
\or        \footnotesize{(Cited on page~#2.)}%
\else      \footnotesize{(Cited on pages~#2.)}%
\fi}

\newtheorem{theorem}{Theorem}[section]
\newtheorem{proposition}[theorem]{Proposition}
\newtheorem{lemma}[theorem]{Lemma}

\theoremstyle{definition}

\theoremstyle{remark}
\newtheorem{remark}[theorem]{Remark}

\numberwithin{equation}{section} 

\newcommand{\setheader}[2]{%
\pagestyle{fancy}
\fancyhf{}
\fancyhead[LO]{#1}
\fancyhead[RO]{}
\fancyhead[LE]{}
\fancyhead[RE]{#2}
\renewcommand{\headrulewidth}{0.4pt}
\fancyfoot[C]{\thepage}
\renewcommand{\headrule}{\color{darkgray!50}\hrule width\textwidth height 1.0pt}
}

\makeatletter
\renewcommand{\maketitle}{
\begin{center}
{\Large\bfseries\@title\par}
\vskip 1.5em
{\@author\par}
\vskip 1em
{\@date\par}
\end{center}
\vskip 1.5em
}
\makeatother

\makeatletter
\renewenvironment{abstract}{
\small\noindent{\bfseries Abstract.}
\ignorespaces
}{\par\vspace{0.5em}}
\makeatother

\newcommand{\keywords}[1]{
\noindent{\small\bfseries Keywords:} {\small #1}
\par\noindent\textcolor{gray}{\rule{\linewidth}{0.5pt}}
}

\titleformat{\section}
{\normalfont\large\bfseries\centering}
{\thesection}{1em}{}
\titleformat{\subsection}[runin]
{\normalfont\normalsize\bfseries}
{\thesubsection}{1em}{}[.]
\titleformat{\subsubsection}[runin]
{\normalfont\normalsize\itshape}
{\thesubsubsection}{1em}{}[.]
\titleformat{\paragraph}[runin]
{\normalfont\normalsize\bfseries}
{\theparagraph}{1em}{}[.]

\titlespacing{\section}
{0pt}{2.0ex plus 0.5ex minus 0.2ex}{1.0ex plus 0.1ex}
\titlespacing{\subsection}
{0pt}{1.0ex plus 0.1ex minus 0.1ex}{1.0ex plus 0.1ex}
\titlespacing{\subsubsection}
{0pt}{1.0ex plus 0.1ex minus 0.1ex}{1.0ex plus 0.1ex}
\titlespacing{\paragraph}
{0pt}{0.5ex plus 0.1ex minus 0.1ex}{0.5ex plus 0.1ex}

\DeclareMathOperator*{\kl}{KL}
\DeclareMathOperator{\fr}{FR}
\DeclareMathOperator{\wfr}{WFR}

\DeclareMathOperator*{\argmin}{arg\, min}
\renewcommand{\div}{\,\mathrm{div}\,}
\renewcommand{\d}{\,\mathrm{d}}
\newcommand{\R}{\mathbb{R}}
\newcommand{\weakly}{\rightharpoonup}\normalfont
\newcommand{\best}{\cellcolor{green!15!white}\bf }
\newcommand{\second}{\cellcolor{blue!15!white}\bf }

\setheader{Sampling via Wasserstein--Fisher--Rao JKO Scheme}{}

\begin{document}
\title{Neural Sampling with Reweighted Normalizing Flows via the Wasserstein--Fisher--Rao JKO Scheme}
\author[1]{Chenguang Duan} 
\author[2]{Johannes Hertrich}
\author[3]{Gabriele Steidl}

\affil[1]{Institut f{\"u}r Geometrie und Praktische Mathematik, RWTH Aachen University, \texttt{duan@igpm.rwth-aachen.de}}
\affil[2]{Institute of Computer Science, University of G\"ottingen, \texttt{johannes.hertrich@uni-goettingen.de}}
\affil[3]{Institute of Mathematics, TU Berlin, \texttt{steidl@math.tu-berlin.de}}
\date{\today}

\thispagestyle{plain}
\maketitle

\begin{abstract}
    We propose a neural algorithm for sampling from distributions specified by unnormalized Boltzmann densities. Our approach is based on the Jordan--Kinderlehrer--Otto scheme for the Kullback--Leibler divergence in the Wasserstein--Fisher--Rao geometry (WFR JKO scheme). Our contributions are twofold. First, we prove that, for any fixed step size, the exact WFR JKO iterates converge exponentially fast to the target as the number of iterations tends to infinity. Notably, this result requires no structural assumptions on the target, such as log-concavity or a logarithmic Sobolev inequality. Second, we develop a neural implementation of the WFR JKO scheme that parametrizes its transport and reaction components using reweighted normalizing flows. Numerical experiments on challenging multimodal targets demonstrate the promising performance of the proposed method.
\end{abstract}

\keywords{Wasserstein--Fisher--Rao distance, Hellinger--Kantorvich distance, Sampling from Boltzmann densities, JKO scheme, Metropolis--Hastings algorithm}

\section{Introduction}

\par Let $\pi(x)=\exp(-V(x))$ be a strictly positive, integrable Boltzmann density on $\mathbb{R}^{d}$. We identify $\pi$ with the corresponding finite measure and denote its total mass by $Z\coloneqq\pi(\mathbb{R}^{d})$. Our aim is to generate samples from the probability distribution $\tilde\pi=\pi/Z$, using the unnormalized density $\pi$ without prior knowledge of $Z$. Such sampling problems arise in Bayesian inverse problems and data assimilation~\cite{Durmus2019High,Overstall2020Bayesian, Heng2021Gibbs,martin2025pnpflow,chang2025provable,Durmus2025Generative,cardoso2024monte,Ding2026Nonlinear}, statistical mechanics~\cite{Landau2014Guide,Frank2019Boltzmann,midgley2023flow}, and machine learning~\cite{Cobb2021Scaling,Izmailov2021what}. They are particularly challenging in high dimensions and when the target has several well-separated modes. In these settings, accurate sampling requires both exploration of the relevant regions of the state space and recovery of their relative probability masses.

\par A variational approach to this problem is to formulate sampling as the minimization of a discrepancy functional over a space of measures. A natural objective is the Kullback--Leibler (KL) divergence, whose unique minimizer is the target distribution. Equipping the space of measures with a metric determines a gradient flow of this objective and thereby provides a starting point for constructing sampling algorithms. In particular, the gradient flow of $\kl(\cdot\,\|\,\tilde\pi)$ in the quadratic Wasserstein geometry is the Fokker--Planck equation associated with overdamped Langevin dynamics~\cite{JKO1998}. The law of the Langevin diffusion thus decreases the KL divergence through the transport of probability mass.

\par This common continuous-time description leads to two discretization routes. On the particle level, an explicit Euler--Maruyama discretization of the Langevin diffusion yields the unadjusted Langevin algorithm~\cite{Durmus2019High}, with stochastic-gradient variants supporting applications involving large datasets~\cite{WT2011}. On the level of measures, the Jordan--Kinderlehrer--Otto (JKO) scheme~\cite{JKO1998} provides an implicit variational discretization. Each step minimizes the KL divergence together with a squared Wasserstein-distance penalty relative to the preceding iterate. These variational problems can be approximated using normalizing flows, leading to neural JKO sampling methods~\cite{Hertrich2025Importance}. Continuous normalizing flows are particularly suitable for this purpose, since they describe both transport and density evolution through ordinary differential equations. Related implementations of the Wasserstein JKO scheme have also been developed for generative modeling~\cite{OFLR2021,VWTON2023,Xu2023Normalizing}.

\par Despite their different numerical realizations, both approaches retain the transport geometry of the underlying Wasserstein gradient flow. For multimodal targets, this geometry can lead to slow redistribution of mass: transferring probability between separated modes requires crossing regions of low density. The resulting metastability is reflected in the convergence analysis. For nonconvex potentials, the KL divergence need not be geodesically convex in the Wasserstein metric~\cite{Chewi2025Statistical}, and quantitative convergence estimates commonly rely on functional inequalities, such as a logarithmic Sobolev inequality (LSI)~\cite{Bakry1985Diffusions,Vempala2019Rapid,CELSZ2025}. The associated constants can deteriorate substantially as the modes become more strongly separated. Consequently, changing from an explicit to an implicit discretization does not, by itself, remove the difficulty of sampling multimodal targets. This motivates augmenting local transport with mechanisms for global redistribution, as in the reweighting and resampling steps of sequential Monte Carlo~\cite{DDJ2006}, the learned nonlocal proposals of adaptive Monte Carlo~\cite{GRV2022}, and the importance corrections of neural JKO sampling~\cite{Hertrich2025Importance}.

\par The Wasserstein--Fisher--Rao (WFR) metric, also known as the Hellinger--Kantorovich metric, incorporates transport and mass adjustment into a single geometric framework~\cite{Chizat2018Interpolating,liero2016optimal,liero2018optimal}. Its Wasserstein component moves mass through the state space, while its Fisher--Rao component allows mass to be created or removed. The relative masses of separated modes can therefore be adjusted without transporting all of the redistributed mass through the intervening low-density regions. Transport and reaction play complementary roles: the former supports spatial exploration, whereas the latter corrects the allocation of mass. Beyond this geometric intuition, it has been shown that exponential convergence of the WFR gradient flow can be established without assuming log-concavity or a logarithmic Sobolev inequality for the target, provided suitable conditions on the initial density ratio hold~\cite{lu2023birth,crucinio2026sequential}. These results demonstrate that reaction can yield convergence rates independent of potential barriers and provide a theoretical motivation for using WFR-type geometries in multimodal sampling. 

\par An explicit particle realization of this principle leads to Langevin sampling with birth--death mechanisms~\cite{lu2019accelerating,lu2023birth,Tan2025Accelerate}. In the probability-preserving setting, the underlying dynamics correspond to a spherical counterpart of the WFR geometry. The resulting algorithms combine Langevin updates with particle duplication and removal according to the discrepancy between the current and target densities. Although these updates can accelerate redistribution among modes, their implementation introduces a further difficulty: the birth--death rates depend on the evolving density, which is not directly available from an empirical particle measure. Kernel-based approximations introduce bandwidth dependence and become difficult to resolve accurately in high dimensions~\cite{lu2023birth}. Thus, exploiting the favorable properties of the continuum dynamics requires a numerical representation that also provides access to the evolving density.

\par These considerations motivate an implicit variational approach combined with a tractable density representation. In this paper, we develop a neural sampling method based on the WFR JKO scheme. We work with the unbalanced WFR geometry on finite positive measures and minimize the extended KL divergence $\kl(\cdot\,\|\,\pi)$. Its minimizer is the unnormalized measure $\pi$, so the scheme approximates both the shape and the total mass of the target. The unknown normalizing constant appears only as an additive term in the objective and is therefore unnecessary for computing the variational updates. WFR gradient flows and their variational approximations have been studied in~\cite{gallouet2017jko,fleissner2021minimizing,mielke2025hellinger}; here, we analyze the long-time behavior of the discrete scheme at a fixed time step.

\par Our main theoretical result establishes geometric decay of the KL divergence along the exact WFR-JKO iterates. The contraction factor depends only on the time step and the WFR parameter, independently of the target measure. In particular, the estimate requires neither convexity of the potential nor a logarithmic Sobolev inequality for the target. 

\par To make the variational steps computationally accessible, we parametrize both the transport velocity and the reaction rate by neural networks. The characteristic representation of the continuity equation with reaction then combines a continuous normalizing flow with a positive, learned reweighting function. We derive a computable training objective, a Monte Carlo approximation, and a stochastic trace estimator for the divergence term. This representation propagates particle positions, density values, and total mass, thereby avoiding the kernel density estimation required by standard birth--death particle implementations. 

\par The practical algorithm combines these learned transport--reaction steps with resampling to obtain an unweighted particle ensemble. To mitigate the loss of diversity caused by particle duplication, we incorporate Metropolis--Hastings rejuvenation steps targeting the normalized current model distribution~\cite{Hastings1970,Hertrich2025Importance}. The availability of model density evaluations allows proposals in both the data space and the latent space of the learned flow. Numerical experiments on multimodal targets compare the proposed method with established MCMC and neural sampling approaches and examine the effect of these rejuvenation strategies.

\subsection*{Contributions}

\par The main contributions of this work are as follows:
\begin{enumerate}
\item We prove geometric decay of the extended KL divergence along the exact WFR JKO iterates for strictly positive, integrable targets. For each fixed time step, the contraction factor depends only on the time step and the WFR parameter, without requiring log-concavity or a logarithmic Sobolev inequality for the target. 
\item We derive a computable WFR JKO objective by combining continuous normalizing flows with learned reaction rates. This representation jointly describes spatial transport and mass adjustment, provides access to density values and total mass, and avoids kernel density estimation. 
\item We combine the learned transport--reaction updates with resampling and Metropolis--Hastings rejuvenation to mitigate particle degeneracy. We consider proposals in both the data and latent spaces and evaluate the resulting method on multimodal targets through comparisons with established MCMC and neural sampling approaches.
\end{enumerate}

\subsection*{Outline of the paper}  
In Section \ref{sec:prelim}, we collect basic results
on the Wasserstein metric for probability measures as well as on the Fisher--Rao metric 
and the WFR  metric, also known as Hellinger--Kantorivich metric, for positive measures. In particular, we rely on their dynamic formulations to formulate our loss function later in \eqref{eq:obj:WFR:vg}.
In Section \ref{section:WFR:JKO}, we deal with the WFR--JKO scheme for minimizing the KL functional on the space of positive measures. As main result, we prove the exponential convergence of the scheme, see Theorem \ref{thm:main}.
In Section \ref{sec_norm_glows}, we reformulate the  WFR--JKO step minimization 
for an appropriate admissibility class of velocities and mobility functions. We provide
a computable formulation for our objective based on ideas from the normalizing flows  and present the corresponding algorithm.
Since the resampling step unavoidably causes particle degeneracy we propose to rejuvenate the ensemble while preserving its target by applying a Metropolis--Hastings step after each resampling step in Section \ref{sec:MH}.
Finally, we present our promising numerical results in Section \ref{sec:numerics}. 
The appendices contain  proofs of auxiliray results and the pseudo-code of the algorithms.
\section{Preliminaries}\label{sec:prelim}

Let $\mathscr{M}(\mathbb{R}^{d})$ denote the set of finite \emph{positive} Borel measures on $\mathbb{R}^{d}$,
$\mathscr{P}(\mathbb{R}^{d})$  the set of  probability measure, and $\mathscr{P}_{2}(\mathbb{R}^{d})$ its subset of measures with finite second moments. 
For convenience, we just write $\int$ instead of $\int_{\R^d}$ in the following.

The \emph{Kullback-Leibler} (KL) \emph{divergence} is defined, 
for $\mu,\nu \in \mathscr{M}(\mathbb{R}^{d})$, as
$$
\kl(\mu\,\|\,\nu)=\int \log \left(\frac{\d \mu}{\d \nu} \right) \, \d\mu - \mu(\R^d) + \nu(\R^d)
$$
if $\mu$ is absolutely continuous with respect to $\nu$,
so that the Radon-Nikodym derivative $\frac{\d \mu}{\d \nu}$ exits and $+\infty$ otherwise.
We have $\kl(\mu,\nu)\geq 0$ with equality if and only if $\mu=\nu$.
We will need the following proposition which states 
the unit ball in the KL divergence is sequentially compact with respect to weak convergence.  

\begin{proposition}\label{prop:KL_balls_compact}
Let $r>0$ and $\pi\in\mathscr{M}(\R^d)$. Then, the set $\{\rho\in\mathscr{M}(\R^d):\kl(\rho\|\pi)\leq r\}$ is weakly sequentially compact.
\end{proposition}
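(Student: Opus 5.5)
The plan is to apply Prokhorov's theorem for finite positive measures: a sequence that is bounded in total mass and tight has a weakly convergent subsequence. After that, lower semicontinuity of the KL divergence shows the limit stays in the set. Write $S_r=\{\rho:\kl(\rho\|\pi)\le r\}$ and take $(\rho_n)\subset S_r$. Each $\rho_n\ll\pi$, so $\rho_n=f_n\pi$. Set $\phi(t)=t\log t-t+1\ge 0$; then $\kl(\rho_n\|\pi)=\int\phi(f_n)\d\pi\le r$.

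\emph{Step 1: uniform mass bound and tightness.} The function $\phi$ is superlinear, $\phi(t)/t\to\infty$. For any $M>1$ and any Borel set $A$,
\begin{equation}
\rho_n(A)=\int_A f_n\d\pi\le M\,\pi(A)+\int_{\{f_n>M\}} f_n\d\pi\le M\,\pi(A)+\frac{r}{\inf_{t>M}\phi(t)/t}.
\end{equation}
With $A=\R^d$ this gives $\sup_n\rho_n(\R^d)\le M\pi(\R^d)+C<\infty$. The measure $\pi$ is finite, so it is tight. Given $\varepsilon>0$, first choose $M$ so that the second term is at most $\varepsilon/2$. Then choose a compact $K$ with $\pi(K^c)\le\varepsilon/(2M)$. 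This yields $\sup_n\rho_n(K^c)\le\varepsilon$, so the sequence is tight. This is the de la Vallée-Poussin uniform integrability argument.

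\emph{Step 2: extraction.} By Prokhorov's theorem for bounded tight families of finite measures, a subsequence satisfies $\rho_{n_k}\weakly\rho$ against bounded continuous functions, with $\rho\in\mathscr{M}(\R^d)$. If the bounded masses $m_n=\rho_n(\R^d)$ tend to $0$, the limit is $0$, and $0$ lies in $S_r$ only if $\pi(\R^d)\le r$; the next step handles this case uniformly. Since the test function $1$ is admissible, $\rho_{n_k}(\R^d)\to\rho(\R^d)$.

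\emph{Step 3: closedness.} We need $\kl(\rho\|\pi)\le\liminf\kl(\rho_{n_k}\|\pi)\le r$. I would use the Donsker--Varadhan type dual representation of the extended KL divergence on positive measures,
\begin{equation}
\kl(\mu\|\nu)=\sup_{g\in C_b(\R^d)}\Bigl\{\int g\d\mu-\int (e^{g}-1)\d\nu\Bigr\},
\end{equation}
which follows from the Legendre conjugate $\phi^*(s)=e^s-1$. For each fixed $g$ the bracket is weakly continuous in $\mu$, because $\nu=\pi$ is fixed. A supremum of continuous functions is lower semicontinuous, so the claim follows, and absolute continuity of $\rho$ is automatic since the limit value is finite.

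\emph{Main obstacle.} The delicate step is justifying this dual formula with the supremum over bounded continuous $g$ rather than bounded measurable $g$. The pointwise inequality $g f-(e^g-1)\le\phi(f)$ gives ``$\ge$'' immediately. For ``$\le$'' I would take $g$ as a truncation of $\log f$, approximate it in $L^1(\mu+\nu)$ by continuous functions via Lusin's theorem, and pass to the limit with dominated and monotone convergence. If $\mu\not\ll\nu$, the choice $g=c\,\chi$, with $\chi$ a continuous approximation of the indicator of a $\nu$-null set of positive $\mu$-mass, sends the supremum to $+\infty$. Alternatively, one can cite the standard joint weak lower semicontinuity of relative entropy functionals with superlinear integrands (Ambrosio--Gigli--Savaré).
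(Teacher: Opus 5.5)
Your proof is correct and follows the same route as the paper. Both arguments establish a uniform mass bound and tightness, apply Prokhorov's theorem, and then close the set via weak lower semicontinuity of $\kl(\cdot\,\|\,\pi)$, obtained from the dual representation as a supremum over $g\in C_b(\R^d)$ of $\int g\d\mu-\int(e^{g}-1)\d\pi$. The only variation is that you derive the mass bound and tightness from the superlinearity of $\phi$ (de la Vall\'ee-Poussin), whereas the paper tests the dual inequality with $f\equiv 1$ and $f=a\mathbf{1}_{K^c}$. Your version thereby avoids the paper's use of a discontinuous test function in a formula stated only for $C_b$. Your sketch also supplies the justification of the $C_b$ dual formula, which the paper merely asserts.
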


 The proof is a standard consequence of  Prokhorov's theorem and the weak lower semicontinuity of KL. For convenience we give it in Appendix. 

Let $C_b(\R^d)$ denote the space of continuous bounded functions on $\R^d$.
A sequence $(\mu_k)_k$ in $\mathscr{M}(\mathbb{R}^{d})$
\emph{converges weakly} to $\mu \in \mathscr{M}(\mathbb{R}^{d})$, 
written $\mu_n \weakly \mu$,
if $\int \varphi \, \d \mu_n \to \int \varphi \, \d \mu$ for all $\varphi \in C_b(\R^d)$. 

By $L^p(\mu)$, $1 \le p  < \infty$, we denote the Banach space of (equivalence classes)
of functions which $p$-the power is $\mu$-integrable with the corresponding norm. We write $L^p(\mu,\R^d)$ for the corresponding space of $p$-integrable vector fields.

In the following, we recall  the Wasserstein-2, the Fisher--Rao, and the Wasserstein--Fisher--Rao distances on these spaces, together with the gradient flows they induce.

\subsection{Wasserstein Metric}

The \emph{Wasserstein(-2) distance} between $\mu,\nu\in\mathscr{P}_{2}(\mathbb{R}^{d})$ is defined as
\begin{equation}\label{eq:W2}
W_{2}^{2}(\mu,\nu) 
\coloneqq \inf_{\gamma\in\Gamma(\mu,\nu)}
\int_{\mathbb{R}^{d}\times\mathbb{R}^{d}} |x-y|^{2}\, \d \gamma(x,y),
\end{equation}
where 
$
\Gamma(\mu,\nu) \coloneqq \{\gamma \in \mathscr{P}_{2}(\mathbb{R}^{d} \times \R^d: (P_1)_\# \gamma = \mu, \, (P_2)_\# \gamma = \nu\}
$ 
with projections $P_1(x,y) = x$, $P_2(x,y) = y$
and the push-forward measures 
$(P_i)_\# \gamma \coloneqq \gamma \circ P_i$, $i=1,2$.
equipped with $W_2$, the set $\mathscr{P}_2$ becomes a metric space.
Further, we have that $W_{2}(\mu_k,\mu) \to 0$ as $k \to \infty$ if and only if $\mu_k \weakly \mu$  and $\int |x|^2 \, \d\mu_k \to \int |x|^2 \, \d\mu$ as $k \to \infty$.

A weakly continuous curve $\mu_t: (0,1)\to \mathscr{P}_2(\mathbb R^d)$  is \emph{absolutely continuous} if and only if there exists a Borel measurable vector field $v:(0,1)\times \mathbb R^d \to \mathbb R^d$ such that $\|v_t\|_{L^2(\mu_t,\R^d)}\in L^1(0,1)$ and
$(\mu_t,v_t)$ fulfills the \emph{continuity equation}
\begin{equation}\label{eq:continuity}
\partial_{t}\mu_{t}+\div(\mu_{t}v_{t})=0
\end{equation}
in a distributional sense.
The \emph{Benamou--Brenier formula}~\cite{Benamou2000computational} recasts the Wasserstein metric as the minimal kinetic energy needed to connect two probability measures as 
\begin{equation}\label{eq:W2:BB}
W_{2}^{2}(\mu,\nu) = \inf_{(\mu_t,v_t)} \Biggl\{\int_{0}^{1}\int |v_{t}|^{2}\,\mathrm{d}\mu_{t} \, \d t \colon
\begin{aligned}
&\partial_{t}\mu_{t}+\div(\mu_{t}v_{t})=0, \\
&\mu_0=\mu,~\mu_1=\nu
\end{aligned}
\Biggr\}.
\end{equation}

The following proposition assiciates absolutely continuous curves with ODE solutions, see \cite[Proposition 8.1.8, Lemma 8.1.4]{Ambrosio2005Gradient} in a one-to-one fashion.

\begin{proposition}\label{prop:basic_w}
Let $v\colon[0,1]\times\mathbb{R}^{d}\to\mathbb{R}^{d}$ fulfill 
\begin{align}\label{lip_v}
\int_I \sup_{x \in \R^d} |v_t(x)|+\mathrm{Lip}(v_t) \, \d t<\infty.
\end{align}
Then, for every $x \in \R^d$, the ODE
\begin{align}\label{eq:flow_ode} 
\partial_t \phi_t = v_t(\phi_t), \quad \phi(0,x)=x
\end{align}
admits unique solution $\phi_t =\phi(t,x) :I\times\mathbb R^d\to \R^d$ on $[0,1]$ and the map $\phi_{t}\colon \mathbb{R}^{d}\to\mathbb{R}^{d}$, $t \in [0,1]$  is a $C^{1}$-diffeomorphisms and
its determinant of the Jacobian $\det\nabla\phi_{t}$ fulfills the Liouville formula 
\begin{equation} \label{eq:det}
\det\nabla\phi_{t}(x) = \exp\Bigl(\int_{0}^{t}\div v_{s}(\phi_{s}(x))\,\mathrm{d}s\Bigr).
\end{equation}
The curve 
\begin{equation} \label{eq:push}
\mu_t \coloneqq \phi(t,\cdot)_\sharp\mu_0
\end{equation}
is the unique weakly continuous solution of  the continuity equation \eqref{eq:continuity} with the above vector field.
\end{proposition}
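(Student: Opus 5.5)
The plan is to follow the classical Cauchy--Lipschitz argument together with the standard characteristics method for transport equations, as in \cite[Lemma 8.1.4, Proposition 8.1.8]{Ambrosio2005Gradient}.

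\emph{Step 1: the flow.} By \eqref{lip_v}, the map $t\mapsto \mathrm{Lip}(v_t)$ is integrable and $v_t$ is uniformly bounded in an integrable way. For each $x$, we therefore obtain a unique absolutely continuous solution of \eqref{eq:flow_ode} on $[0,1]$ via a Picard iteration in $C([0,1];\R^d)$, and the solution cannot blow up because $\int_0^1\sup|v_t|\,\d t<\infty$. Grönwall's inequality gives
\[
|\phi_t(x)-\phi_t(y)|\le \exp\Bigl(\int_0^t \mathrm{Lip}(v_s)\,\d s\Bigr)|x-y|,
\]
so each $\phi_t$ is Lipschitz. Solving the ODE backwards from time $t$ yields an inverse map with the same Lipschitz bound. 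Hence $\phi_t$ is a bi-Lipschitz homeomorphism.

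\emph{Step 2: regularity and the Liouville formula.} The $C^1$-diffeomorphism claim needs $v_t$ to be $C^1$ in $x$; I read this as implicit in the statement, since Lipschitz continuity alone only gives bi-Lipschitz maps. Under that assumption, differentiability with respect to initial data shows that $\nabla\phi_t(x)$ solves the variational equation $\partial_t\nabla\phi_t=\nabla v_t(\phi_t)\nabla\phi_t$ with $\nabla\phi_0=I$. Jacobi's formula then gives
\[
\partial_t\det\nabla\phi_t=\operatorname{tr}\bigl(\nabla v_t(\phi_t)\bigr)\det\nabla\phi_t=\div v_t(\phi_t)\det\nabla\phi_t,
\]
and integrating this scalar linear ODE yields \eqref{eq:det}. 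Since the determinant is positive, invertibility of the Jacobian is confirmed.

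\emph{Step 3: $\mu_t$ solves the continuity equation.} For $\varphi\in C_c^\infty(\R^d)$,
\[
\frac{\d}{\d t}\int\varphi\,\d\mu_t=\int \nabla\varphi(\phi_t(x))\cdot v_t(\phi_t(x))\,\d\mu_0(x)=\int\nabla\varphi\cdot v_t\,\d\mu_t,
\]
where differentiation under the integral is justified by dominated convergence using $\sup|v_t|\in L^1$. This is the distributional form of \eqref{eq:continuity}. Weak continuity of $t\mapsto\mu_t$ follows because $\phi_t(x)$ is continuous in $t$ for each $x$, combined with dominated convergence.

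\emph{Step 4: uniqueness (the main obstacle).} Take another weakly continuous solution $\tilde\mu_t$ with the same initial datum. Fix $T$ and $\psi\in C_c^\infty$, and solve the backward transport equation
\[
\partial_t\varphi_t+v_t\cdot\nabla\varphi_t=0,\qquad \varphi_T=\psi,
\]
explicitly by $\varphi_t=\psi\circ\phi_{t,T}$, where $\phi_{t,T}$ is the flow from time $t$ to time $T$. Testing the equation for $\tilde\mu_t$ against $\varphi_t$ gives $\int\psi\,\d\tilde\mu_T=\int\varphi_0\,\d\mu_0=\int\psi\,\d\mu_T$, hence $\tilde\mu_T=\mu_T$.

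The difficulty is that $\varphi_t$ is only Lipschitz in $x$ and absolutely continuous in $t$, so it is not an admissible smooth test function. I would handle this by regularizing $v$ by space convolution, obtaining smooth duals $\varphi^\varepsilon_t$ with uniform Lipschitz bounds and compact supports, and then passing to the limit. The error term is $\int_0^T\!\int(v_t-v^\varepsilon_t)\cdot\nabla\varphi^\varepsilon_t\,\d\tilde\mu_t\,\d t$, which tends to zero by dominated convergence because $v^\varepsilon_t\to v_t$ locally uniformly and $\sup|v_t|\in L^1$.
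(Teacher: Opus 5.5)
Your proposal is correct and follows essentially the same route as the paper. The paper does not prove the statement itself but refers to \cite[Lemma 8.1.4, Proposition 8.1.8]{Ambrosio2005Gradient}, whose proofs rest on the same ingredients you use: Cauchy--Lipschitz and Gr\"onwall for the flow, the push-forward computation for existence, and for uniqueness a duality argument with a backward transport equation driven by a space-mollified velocity. Your caveat in Step~2 is also well founded. With $v_t$ merely Lipschitz in $x$, the flow is only a bi-Lipschitz homeomorphism, and the cited results yield neither the $C^1$ property nor the Liouville formula. Assuming $v_t\in C^1$ in space, as holds for the smooth SiLU networks used later, is therefore the right way to make that part of the statement true.
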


\subsection{Fisher--Rao Metric}
The \emph{Fisher--Rao metric} for $\mu,\nu\in\mathscr{M}(\mathbb{R}^{d})$ is defined by
\begin{equation}\label{eq:FR}
\fr^{2}(\mu,\nu) \coloneqq 4\int \left(\sqrt{\frac{\d \mu}{\d (\mu+\nu)}}-\sqrt{\frac{\d\nu}{\d (\mu+\nu)}} \right)^{2}\,\d (\mu+\nu). 
\end{equation}
The Fisher--Rao curve $\mu_t: (0,1) \to \mathscr{M}(\mathbb R^d)$ is determined by the  reaction equation
\begin{equation}\label{eq:reaction}
\partial_{t}\mu_{t}=g_{t}\mu_{t}, 
\end{equation}
where the measurable function $g_{t}\colon\mathbb{R}^{d}\to\mathbb{R}$ is the pointwise rate of mass creation. Analogously to~\eqref{eq:W2:BB}, the metric~\eqref{eq:FR} admits the dynamical representation
\begin{equation}\label{eq:FR:reaction}
\fr^{2}(\mu,\nu) = \inf_{(\mu_t,g_t)}\Biggl\{\int_{0}^{1}\int g_{t}^{2}\,\mathrm{d}\mu_{t}\mathrm{d}t \colon
\begin{aligned}
&\partial_{t}\mu_{t}=g_{t}\mu_{t}, \\
&\mu_0=\mu,~\mu_1=\nu
\end{aligned}
\Biggr\}.
\end{equation}

\subsection{Wasserstein--Fisher--Rao Metric}
\label{subsec:WFR}

Combining the transport and reaction geometries yields a metric on  $\mathscr{M}(\mathbb R^d)$ that allows mass to be simultaneously transported and created or destroyed. The corresponding dynamics obeys the \emph{continuity equation with reaction}
\begin{equation}\label{eq:continuity:reaction}
\partial_{t}\mu_{t}+\div(\mu_{t}v_{t})=g_{t}\,\mu_{t}.
\end{equation}
in the sense of distributions. We assume that the coefficients $v$ and $g$ are Hellinger-Kantorovich regular, namely, they are Borel measurable and satisfy $\|v_{t}\|_{L^{2}(\mu_t,\mathbb{R}^{d})}\in L^{1}(0,1)$ and $\|g_{t}\|_{L^{2}(\mu_{t})}\in L^{1}(0,1)$.

The \emph{Wasserstein--Fisher--Rao} (WFR) \emph{metric}, also known as \emph{ Hellinger--Kantorovich metric}~\citep{Chizat2018Interpolating}, with parameter $\alpha>0$ is defined as the minimal total kinetic plus reactive action  required to interpolate between  $\mu,\nu\in\mathscr{M}(\mathbb{R}^{d})$:
\begin{equation}\label{eq:WFR}
\wfr_{\alpha}^{2}(\mu,\nu) =
\inf_{(\mu_t,v_t,g_t)}\Biggl\{\int_{0}^{1}\int
|v_{t}|^{2}+\frac{1}{\alpha}g_{t}^{2}\, \d\mu_{t}\d t \colon
\begin{aligned}
&\partial_{t}\mu_{t}+\div(\mu_{t}v_{t})=g_{t}\,\mu_{t}, \\
&\mu_0=\mu,\ \mu_1=\nu
\end{aligned}
\Biggr\}.
\end{equation}
Setting $g\equiv0$ recovers the Benamou-Brenier formula~\eqref{eq:W2:BB}, while setting $v\equiv0$ gives the Fisher--Rao representation~\eqref{eq:FR:reaction}; in this sense $\wfr_{\alpha}$ interpolates between the Wasserstein and Fisher--Rao metrics as $\alpha$ varies.
We note that changing the parameter $\alpha$ in the Wasserstein--Fisher--Rao metric corresponds to rescaling, see also \cite[p.~13]{liero2023fine}.
More precisely, it follows directly from the its definition  that
\begin{equation}\label{eq:scale}
\wfr_{\alpha_1}^2(\mu,\nu)=\frac{\alpha_2}{\alpha_1}\wfr_{\alpha_2}^2((s_{\alpha_1/\alpha_2})_\#\mu,(s_{\alpha_1/\alpha_2})_\#\nu),\quad s_\alpha(x)=\sqrt{\alpha}x.
\end{equation}
Further, the WFR metric metrizes weak convergence:

\begin{proposition}\cite[Thm 7.15]{liero2018optimal}\label{thm:wfr_met_weak_conv}
Let $\mu_k \in\mathscr{M}(\R^d)$. Then $\wfr_{\alpha} (\mu_k, \mu) \to 0 $
if and only if $\mu_k \weakly \mu$ as $k \to \infty$.
\end{proposition}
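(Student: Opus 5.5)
We prove the two implications separately, working directly with the dynamic formulation \eqref{eq:WFR}. By the scaling relation \eqref{eq:scale}, and since $s_\alpha$ is a homeomorphism preserving weak convergence, it suffices to treat a single fixed $\alpha$. We use two elementary facts about admissible triples $(\mu_t,v_t,g_t)$, which follow because the action $\int |v|^2+\frac1\alpha g^2\,\d\mu$ is jointly convex and $1$-homogeneous in $(\mu,\mu v,\mu g)$.

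\emph{Mass control.} Testing \eqref{eq:continuity:reaction} with $\varphi\equiv1$ (justified by a cutoff argument) gives $\frac{\d}{\d t}m_t=\int g_t\,\d\mu_t\le \|g_t\|_{L^2(\mu_t)}\sqrt{m_t}$, where $m_t=\mu_t(\R^d)$. Hence $|\sqrt{m_1}-\sqrt{m_0}|\le \tfrac{\sqrt\alpha}{2}\int_0^1(\tfrac1\alpha\|g_t\|^2_{L^2(\mu_t)})^{1/2}\d t$.

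\emph{Subadditivity.} Superposing two admissible curves yields an admissible curve for the sum. Joint convexity of the action density then gives $\wfr_\alpha^2(\mu_1+\mu_2,\nu_1+\nu_2)\le \wfr_\alpha^2(\mu_1,\nu_1)+\wfr_\alpha^2(\mu_2,\nu_2)$.

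($\Rightarrow$) Assume $\wfr_\alpha(\mu_k,\mu)\to0$ and pick curves from $\mu$ to $\mu_k$ whose action is at most $\wfr_\alpha^2(\mu_k,\mu)+1/k$. The mass estimate shows $\mu_k(\R^d)\to\mu(\R^d)$ and that the masses along these curves are uniformly bounded by some $M$. For a bounded Lipschitz $\varphi$ we compute
\[
\frac{\d}{\d t}\int\varphi\,\d\mu_t=\int \nabla\varphi\cdot v_t+\varphi g_t\,\d\mu_t\le\bigl(\mathrm{Lip}(\varphi)\|v_t\|_{L^2(\mu_t)}+\|\varphi\|_\infty\|g_t\|_{L^2(\mu_t)}\bigr)\sqrt{M}.
\]
Integrating in $t$ and applying Cauchy--Schwarz gives $|\int\varphi\,\d\mu_k-\int\varphi\,\d\mu|\le C(\varphi,\alpha)\sqrt M\,\wfr_\alpha(\mu_k,\mu)\to0$. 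Convergence against bounded Lipschitz functions, together with bounded masses, implies weak convergence by the portmanteau theorem.

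($\Leftarrow$) Assume $\mu_k\weakly\mu$. Fix $\varepsilon>0$. By Prokhorov's theorem (the family $\{\mu_k\}$ is tight and has bounded mass), choose a large cube $K$ with $\mu_k(K^c)<\varepsilon$ for all $k$. We may take $\mu(\partial K)=0$, so also $\mu(K^c)\le\varepsilon$. Partition $K$ into finitely many cubes $Q_i$ of side $\delta$ with centers $x_i$ and $\mu(\partial Q_i)=0$; then $\mu_k(Q_i)\to\mu(Q_i)$. We bound $\wfr_\alpha(\mu_k,\mu)$ through three intermediate comparisons.
\begin{enumerate}
\item \emph{Discard the tail.} The pure reaction curve $t\mapsto(1-t)^2\nu$ shows $\wfr_\alpha^2(\nu,0)\le\frac{4}{\alpha}\nu(\R^d)$. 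Subadditivity then lets us replace $\mu_k$ by $\mu_k|_K$ at cost $O(\sqrt{\varepsilon/\alpha})$, and likewise $\mu$ by $\mu|_K$.
\item \emph{Quantize.} Transport $\mu_k|_{Q_i}$ to $\mu_k(Q_i)\delta_{x_i}$ along straight lines with $g=0$, which is a Benamou--Brenier curve of cost at most $d\delta^2\mu_k(Q_i)$. By subadditivity the total squared cost is at most $d\delta^2\sup_k\mu_k(\R^d)$, uniformly in $k$. The same bound holds for $\mu$.
\item \emph{Adjust masses.} Between $\sum_i a_i\delta_{x_i}$ and $\sum_i b_i\delta_{x_i}$, the pure reaction curve gives squared cost $\frac4\alpha\sum_i(\sqrt{a_i}-\sqrt{b_i})^2$. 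With $a_i=\mu_k(Q_i)$ and $b_i=\mu(Q_i)$, this tends to $0$ as $k\to\infty$ because there are finitely many $i$.
\end{enumerate}
By the triangle inequality, $\limsup_k\wfr_\alpha(\mu_k,\mu)\lesssim\sqrt{\varepsilon/\alpha}+\delta\sqrt{d\,\sup_k\mu_k(\R^d)}$. Letting $\delta,\varepsilon\to0$ gives $\wfr_\alpha(\mu_k,\mu)\to0$.

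The main obstacle is making the auxiliary facts rigorous from the distributional definition of \eqref{eq:continuity:reaction}. This means justifying the test with $\varphi\equiv1$ via cutoffs and integrability of $\|g_t\|$, and proving subadditivity, where $v$ and $g$ must be defined as density-weighted averages on the superposed curve. I would settle these first, using convexity of $(\rho,m,\zeta)\mapsto(|m|^2+\zeta^2/\alpha)/\rho$. The triangle inequality for $\wfr_\alpha$ is taken as known, since the paper calls it a metric.
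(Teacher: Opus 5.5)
\textbf{Verdict.} Your proof is correct, but it follows a different route from the paper. The paper gives no argument of its own and imports \cite[Thm 7.15]{liero2018optimal}, where the statement is proved for the Hellinger--Kantorovich distance in its static description: logarithmic entropy transport, and the lift of measures to the metric cone over the base space. Applying that result to the dynamic definition \eqref{eq:WFR} used in the paper therefore also relies implicitly on the static--dynamic equivalence behind \eqref{eq:let_problem}.

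\textbf{How your route differs.} You work only with \eqref{eq:WFR}. For ($\Rightarrow$), the mass estimate $|\sqrt{\mu(\R^d)}-\sqrt{\nu(\R^d)}|\le\frac{\sqrt{\alpha}}{2}\wfr_\alpha(\mu,\nu)$ and your bound of $|\int\varphi\,\d(\mu_k-\mu)|$ by a mass-dependent constant times $\wfr_\alpha(\mu_k,\mu)$ give the implication quantitatively. For ($\Leftarrow$), the decisive observation is that pure reaction removes a tail of mass $\varepsilon$ at squared cost $\frac{4}{\alpha}\varepsilon$. This is exactly why, unlike for $W_2$, no convergence of second moments is needed. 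The remainder is the classical quantization argument combined with subadditivity and the triangle inequality.

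\textbf{What each approach buys.} Your route is elementary, self-contained for the paper's actual definition, and produces explicit constants. The cone approach is more structural: it holds on general complete separable metric spaces and delivers completeness and the static characterization along the way.

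\textbf{Points to handle when writing it out.}
\begin{enumerate}
\item The cutoff argument closes without any a priori mass bound if you test with $\psi(\cdot/R)^2$ for a cutoff $0\le\psi\le 1$. Since $|\nabla\psi(\cdot/R)^2|\le\frac{2\|\nabla\psi\|_\infty}{R}\psi(\cdot/R)$, you get a differential inequality for $(\int\psi(\cdot/R)^2\,\d\mu_t)^{1/2}$ that is uniform in $R$. Letting $R\to\infty$ yields your mass estimate directly.
\item In ($\Rightarrow$) the measures in $\mathscr{M}(\R^d)$ may be singular, so $\nabla\varphi$ of a merely Lipschitz $\varphi$ is not meaningful $\mu_t$-a.e. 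Test instead with $C^1$ functions with bounded derivatives; these are still convergence determining.
\item In ($\Leftarrow$), take the cubes from a generic translate of the grid $\delta\mathbb{Z}^d$, with $K$ a union of cells. Only countably many hyperplanes $\{x_j=c\}$ carry $\mu$-mass, so this guarantees $\mu(\partial Q_i)=0$ while keeping $\mu_k(K^c)<\varepsilon$.
\end{enumerate}
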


In \cite{liero2018optimal}, Liero, Mielke and Savare proved that the WFR metric is actually equivalent to an unbalanced optimal transport problem called logartihmic entropy transport. More precisely, we have the following theorem which was shown for $\alpha=4$ and generalizes for arbitrary $\alpha>0$ by the rescaling property \eqref{eq:scale}. 

\begin{theorem} \cite[Thm 7.20]{liero2018optimal}
Let $\mu,\nu\in\mathscr{M}(\R^d)$. Then the WFR metric coincides with the logartihmic entropy transport (LET) problem given by
\begin{equation}\label{eq:let_problem}
\wfr_\alpha^2(\mu,\nu)=\frac{4}{\alpha}\min_{\gamma\in\mathscr{M}(\R^d\times\R^d)}\{\kl(\gamma_1\|\mu)+\kl(\gamma_2\|\nu)+\int\ell_\alpha \, \d\gamma\},
\end{equation}
where $\gamma_1$ and $\gamma_2$ are the marginals of $\gamma$ and $\ell_\alpha$ is the cost function
$$
\ell_\alpha(x,y)\coloneqq
\begin{cases}
-\log(\cos^2(\frac{\sqrt{\alpha}}{2}\|x-y\|),&\text{if }\|x-y\|<\frac{\pi}{\sqrt{\alpha}},\\
+\infty,&\text{otherwise.}
\end{cases}
$$
\end{theorem}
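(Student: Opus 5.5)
The plan is to derive the statement from the case $\alpha=4$, which is the normalization of the dynamic formulation used in \cite{liero2018optimal}: there the action is $\int|v_t|^2+\tfrac14 g_t^2\,\d\mu_t$, which is \eqref{eq:WFR} with $\alpha=4$. For this case I take as given from \cite[Thm 7.20]{liero2018optimal} that
\begin{equation*}
\wfr_4^2(\mu',\nu')=\min_{\gamma'}\Bigl\{\kl(\gamma'_1\|\mu')+\kl(\gamma'_2\|\nu')+\int\ell_4\,\d\gamma'\Bigr\},
\end{equation*}
with $\ell_4(x,y)=-\log\cos^2(\|x-y\|)$ for $\|x-y\|<\pi/2$ and $+\infty$ otherwise. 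This is exactly \eqref{eq:let_problem} for $\alpha=4$, since the prefactor $4/\alpha$ equals $1$. The only point to confirm here is that the constants in their definition match ours.

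For general $\alpha>0$, apply the rescaling identity \eqref{eq:scale} with $\alpha_1=\alpha$ and $\alpha_2=4$. With $s\coloneqq s_{\alpha/4}$, i.e.\ $s(x)=\tfrac{\sqrt\alpha}{2}x$, this gives
\begin{equation*}
\wfr_\alpha^2(\mu,\nu)=\frac4\alpha\,\wfr_4^2(s_\#\mu,s_\#\nu).
\end{equation*}
It therefore remains to show that the LET value of the pair $(s_\#\mu,s_\#\nu)$ with cost $\ell_4$ equals the LET value of the pair $(\mu,\nu)$ with cost $\ell_\alpha$.

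To do so, I use the bijection $\gamma\mapsto\gamma'=(s\times s)_\#\gamma$ on $\mathscr{M}(\R^d\times\R^d)$. Its inverse is the pushforward under $(s^{-1}\times s^{-1})$, since $s$ is a linear homeomorphism. Three facts are needed, each checked directly.
\begin{itemize}
\item The marginals satisfy $\gamma'_i=s_\#\gamma_i$.
\item The KL divergence is invariant under pushforward by a measurable bijection: $\kl(s_\#\gamma_1\|s_\#\mu)=\kl(\gamma_1\|\mu)$. This holds because $\frac{\d s_\#\gamma_1}{\d s_\#\mu}=\frac{\d\gamma_1}{\d\mu}\circ s^{-1}$, and the total masses are preserved. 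Absolute continuity is also preserved, so the value $+\infty$ transfers as well.
\item The costs match: $\ell_4(s(x),s(y))=-\log\cos^2\bigl(\tfrac{\sqrt\alpha}{2}\|x-y\|\bigr)=\ell_\alpha(x,y)$. The domains agree too, because $\|s(x)-s(y)\|<\pi/2$ if and only if $\|x-y\|<\pi/\sqrt\alpha$. Hence $\int\ell_4\,\d\gamma'=\int\ell_\alpha\,\d\gamma$.
\end{itemize}

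Together these show that the two objective functionals coincide along the bijection. Consequently the two minimal values agree, and a minimizer $\gamma'$ for the $\alpha=4$ problem yields the minimizer $(s^{-1}\times s^{-1})_\#\gamma'$ for the $\alpha$ problem, so "min" is justified. Multiplying by $4/\alpha$ then gives \eqref{eq:let_problem}.

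The main obstacle is not the rescaling, which is mechanical, but matching conventions with \cite{liero2018optimal}. One must confirm that their $\mathsf{HK}$ corresponds to $\alpha=4$ rather than $\alpha=1$, and that their cost is $-\log\cos^2$ rather than $-2\log\cos$ with a different truncation. If the constants differ, the choice of $\alpha_2$ in \eqref{eq:scale} would be adjusted accordingly.
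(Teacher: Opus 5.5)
Your proposal is correct and follows the route the paper itself indicates: take the case $\alpha=4$ from \cite[Thm 7.20]{liero2018optimal} (whose dynamic action $|v|^2+\tfrac14 g^2$ is indeed the paper's $\alpha=4$), then pass to general $\alpha$ via the rescaling identity \eqref{eq:scale} with $s(x)=\tfrac{\sqrt\alpha}{2}x$. Your explicit check that the LET functional is invariant under $\gamma\mapsto(s\times s)_\#\gamma$ fills in details the paper leaves implicit: the marginals transform by $s_\#$, KL is preserved under pushforward by a bijection, and $\ell_4\circ(s\times s)=\ell_\alpha$ on matching domains.
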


The following proposition is a direct conclusion of~\cite[Propositions 2.3 and 2.4]{Bredies2022superposition} and was shown for bounded $g$ instead of 
also in \cite{Maniglia2007}.

\begin{proposition}\label{prop:solution:continuity:reaction}
Let $v\colon[0,1]\times\mathbb{R}^{d}\to\mathbb{R}^{d}$ satisfy \eqref{lip_v}, and let $g\colon[0,1]\times\mathbb{R}^{d}\to\mathbb{R}$ be measurable with
\begin{align}\label{eq:reaction:regularity}
\int_{0}^{1}\sup_{x\in\mathbb{R}^{d}}|g_{t}(x)|+\operatorname{Lip}(g_{t}) \d t 
&<\infty
\end{align}
as well as
\begin{align}\label{eq:reaction:regularity_add}
\int_{0}^{1} \int \sup_{x\in\mathbb{R}^{d}} |v_{t}| + |g_t| \, \d \mu_t \d t 
&<\infty.
\end{align}
Let $\phi\colon[0,1]\times\mathbb{R}^{d}\to\mathbb{R}^{d}$ be the solution of the ODE \eqref{eq:flow_ode}.
Then the curve
\begin{equation}\label{eq:nu}
\mu_{t} \coloneqq 
\phi(t, \cdot)_\# (w_{t}\mu_{0}) , \quad
w_{t}(x) \coloneqq 
\exp\left(\int_{0}^{t}g_{s} \left(\phi(s,x) \right)\d s\right) \quad \mu_{0}-a.e.
\end{equation}
is the unique weakly continuous solution of the continuity equation with reaction ~\eqref{eq:continuity:reaction}.
\end{proposition}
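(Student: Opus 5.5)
The proof splits into existence (the curve \eqref{eq:nu} solves \eqref{eq:continuity:reaction}) and uniqueness (any weakly continuous solution with initial datum $\mu_0$ equals it). For both, I would use the flow $\phi_t$ from Proposition~\ref{prop:basic_w}, which exists and consists of $C^1$-diffeomorphisms under \eqref{lip_v}, together with the bound
\[
0<w_t(x)\le \exp\Bigl(\int_0^1\sup_x|g_s(x)|\,\d s\Bigr),
\]
which holds by \eqref{eq:reaction:regularity}.

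\textbf{Existence.} Fix a test function $\varphi\in C_c^\infty(\R^d)$ and write
\[
\int\varphi\,\d\mu_t=\int \varphi(\phi_t(x))\,w_t(x)\,\d\mu_0(x).
\]
For $\mu_0$-a.e.\ $x$, the maps $t\mapsto\phi_t(x)$ and $t\mapsto w_t(x)$ are absolutely continuous. Their derivatives are $v_t(\phi_t(x))$ and $g_t(\phi_t(x))\,w_t(x)$, respectively. The chain and product rules then give
\[
\frac{\d}{\d t}\,\varphi(\phi_t)\,w_t=\bigl(\nabla\varphi\cdot v_t+\varphi\, g_t\bigr)(\phi_t)\,w_t .
\]
This derivative is dominated by $C\bigl(\sup|v_t|+\sup|g_t|\bigr)$, which lies in $L^1(0,1)$ by \eqref{lip_v} and \eqref{eq:reaction:regularity}. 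Integrating in time and applying Fubini, then pushing forward through $\phi_t$, yields
\[
\int\varphi\,\d\mu_t-\int\varphi\,\d\mu_0=\int_0^t\int \bigl(\nabla\varphi\cdot v_s+\varphi g_s\bigr)\,\d\mu_s\,\d s,
\]
which is the distributional form of \eqref{eq:continuity:reaction}. The condition \eqref{eq:reaction:regularity_add} ensures the right-hand side is finite. Weak continuity of $t\mapsto\mu_t$ follows by dominated convergence for any $\varphi\in C_b$, since $\phi_t(x)$ and $w_t(x)$ are continuous in $t$ and $w_t$ is uniformly bounded.

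\textbf{Uniqueness.} I would use a duality argument. Fix $T\in(0,1]$ and $\psi\in C_c^\infty(\R^d)$, and solve the backward dual equation
\[
\partial_t\zeta_t+v_t\cdot\nabla\zeta_t+g_t\zeta_t=0,\qquad \zeta_T=\psi,
\]
explicitly by characteristics:
\[
\zeta_t(y)=\psi\bigl(\phi_{t,T}(y)\bigr)\exp\Bigl(\int_t^T g_s(\phi_{t,s}(y))\,\d s\Bigr),
\]
where $\phi_{t,s}$ is the two-parameter flow. By \eqref{lip_v} and \eqref{eq:reaction:regularity}, $\zeta$ is bounded, Lipschitz in $y$ uniformly in $t$, and absolutely continuous in $t$, with compact support in $y$ since $\phi$ moves points by at most $\int_0^1\sup|v_t|\,\d t$. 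For any weakly continuous solution $\tilde\mu_t$ with $\tilde\mu_0=\mu_0$, I would then show
\[
\frac{\d}{\d t}\int\zeta_t\,\d\tilde\mu_t=\int\bigl(\partial_t\zeta_t+v_t\cdot\nabla\zeta_t+g_t\zeta_t\bigr)\,\d\tilde\mu_t=0 .
\]
This gives $\int\psi\,\d\tilde\mu_T=\int\zeta_0\,\d\mu_0$. The right-hand side does not depend on the solution, so $\tilde\mu_T=\mu_T$ for every $T$.

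\textbf{Main obstacle.} The step I expect to be hardest is justifying the weak formulation for the test function $\zeta$, which is only Lipschitz in space and absolutely continuous in time rather than $C^\infty_c$. I would first try mollifying $\zeta$ in space and time, $\zeta^\varepsilon=\zeta*\eta_\varepsilon$. The identity holds exactly for $\zeta^\varepsilon$. To pass to the limit $\varepsilon\to0$, I would use that $\nabla\zeta^\varepsilon\to\nabla\zeta$ boundedly a.e., together with the integrability \eqref{eq:reaction:regularity_add} of $|v_t|+|g_t|$ against $\tilde\mu_t$. One subtlety is that a.e.\ convergence of gradients must hold $\tilde\mu_t$-a.e., and $\tilde\mu_t$ may be singular. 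I would handle this by writing the commutator $v\cdot\nabla\zeta^\varepsilon-(v\cdot\nabla\zeta)^\varepsilon$ DiPerna--Lions style, which is controlled by the Lipschitz constant of $v_t$. If this becomes too technical, I would fall back on citing the superposition principle of \cite[Propositions 2.3 and 2.4]{Bredies2022superposition}, as the statement suggests.
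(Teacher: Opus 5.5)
Your proof is correct in outline and takes a genuinely different route from the paper. The paper gives no argument at all: it states the proposition as a direct consequence of \cite[Propositions 2.3 and 2.4]{Bredies2022superposition}, the superposition principle for the continuity equation with a source term, and of \cite{Maniglia2007} for bounded $g$. Your argument is self-contained. Existence comes from differentiating $\varphi(\phi_t)w_t$ along characteristics. Uniqueness comes from the classical duality argument for the pure continuity equation (cf.\ \cite[Sec.~8.1]{Ambrosio2005Gradient}), with the backward transport--reaction equation solved explicitly along the flow. The citation buys brevity and places the result in a framework that also represents arbitrary measure-valued solutions. Your route is elementary and shows where each hypothesis enters. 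Lipschitz continuity of $v_t$ and $g_t$ makes $\zeta_t$ uniformly Lipschitz, and bounded speed gives compact support. Narrow continuity of $\tilde\mu_t$ bounds $\tilde\mu_t(\R^d)$ on $[0,1]$, so \eqref{eq:reaction:regularity_add} holds automatically for any competitor.

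In the step you flag, mollify $\zeta$ in space only. Then, for a.e.\ $t$, the quantity $\partial_t\zeta^\varepsilon_t+v_t\cdot\nabla\zeta^\varepsilon_t+g_t\zeta^\varepsilon_t$ equals the commutator
\[
\int\bigl(v_t(x)-v_t(y)\bigr)\cdot\nabla\zeta_t(y)\,\eta_\varepsilon(x-y)\,\mathrm{d}y+\int\bigl(g_t(x)-g_t(y)\bigr)\zeta_t(y)\,\eta_\varepsilon(x-y)\,\mathrm{d}y .
\]
This is bounded uniformly in $x$ by $\varepsilon\bigl(\mathrm{Lip}(v_t)\|\nabla\zeta_t\|_\infty+\mathrm{Lip}(g_t)\|\zeta_t\|_\infty\bigr)$, which is integrable in $t$. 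Because the bound is uniform in $x$, pairing with a singular $\tilde\mu_t$ is harmless, and you never need $\nabla\zeta^\varepsilon\to\nabla\zeta$ pointwise. Together with $\zeta^\varepsilon\to\zeta$ uniformly, this yields $\int\psi\,\mathrm{d}\tilde\mu_T=\int\zeta_0\,\mathrm{d}\mu_0$.

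With space--time mollification, as you wrote it, the commutator instead contains $v_t(x)-v_s(y)$ with $s\neq t$. $\mathrm{Lip}(v_t)$ does not control this, because $v$ is only measurable in time. The price of space-only mollification is that you need the weak formulation for test functions that are smooth in $x$ but only absolutely continuous in $t$. This follows from the time-independent case by a partition (Riemann-sum) argument, using narrow continuity of $\tilde\mu_t$ and the $C^1$-continuity of $t\mapsto\zeta^\varepsilon_t$.
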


\section{Wasserstein--Fisher--Rao JKO Scheme}\label{section:WFR:JKO}
In this section, we consider the Wasserstein--Fisher--Rao
Jordan-Kinderlehrer-Otto scheme for minimizing functionals on $\mathscr{M}(\mathbb{R}^{d})$ and prove exponential convergence
for the KL functional $F = \kl(\cdot||\pi)$, where $\pi >0$.

\subsection{WFR JKO Scheme}
 Let $\mathcal{F}\colon\mathscr{M}(\mathbb{R}^{d})\to[0,+\infty]$ be a proper functional 
 which we intend to minimize by an implicit gradient descent algorithm in $\mathscr{M}(\mathbb{R}^{d})$ equipped with the WFR metric. The corresponding scheme in the Wasserstein space is known as Jordan-Kinderleher-Otto (JKO) scheme \cite{JKO1998} and we adopt the notation for current geomtry.
 Given $\mu_k \in \mathscr{M}(\mathbb{R}^{d})$, one step of the WFR JKO scheme is defined, whenever a minimizer exists, by
\begin{equation}\label{eq:wfr:jko}
\mu_{k+1} \in\argmin_{\mu\in\mathscr{M} (\mathbb{R}^{d})}\Bigl\{\frac{1}{2h}\wfr_{\alpha}^{2}(\mu,\mu_k)+\mathcal{F}(\mu)\Bigr\}.
\end{equation}
 In this paper, we are only interested in 
 $$\mathcal{F} = \kl(\cdot \,\|\, \pi).$$
Then the sequence in \eqref{eq:wrf_jko_basic} is well-defined. For convenience, we add the small proof.

\begin{lemma} \label{lem:exist}
Let $\pi,\mu_k\in  \mathscr{M}(\R^d)$ with $\kl(\mu_k\|\pi)<\infty$. Then, the minimum in \eqref{eq:wrf_jko_basic} is attained.
\end{lemma}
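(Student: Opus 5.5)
We argue by the direct method of the calculus of variations, working with the objective
\[
J(\mu)\coloneqq\frac{1}{2h}\wfr_{\alpha}^{2}(\mu,\mu_k)+\kl(\mu\,\|\,\pi),\qquad \mu\in\mathscr{M}(\R^d).
\]
First, $J$ is proper. Choosing $\mu=\mu_k$ gives $J(\mu_k)=\kl(\mu_k\|\pi)<\infty$, so $m\coloneqq\inf J\in[0,\kl(\mu_k\|\pi)]$. We then take a minimizing sequence $(\nu_n)_n$ with $J(\nu_n)\to m$. Discarding finitely many terms, we may assume $J(\nu_n)\le m+1\le \kl(\mu_k\|\pi)+1\eqqcolon r$.

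Second, we need compactness. Since $\wfr_\alpha^2\ge 0$, every $\nu_n$ satisfies $\kl(\nu_n\|\pi)\le r$, so the sequence lies in the KL sublevel set $\{\rho:\kl(\rho\|\pi)\le r\}$. By Proposition~\ref{prop:KL_balls_compact} this set is weakly sequentially compact. Hence a subsequence, not relabelled, converges weakly to some $\mu^\ast\in\mathscr{M}(\R^d)$, and $\kl(\mu^\ast\|\pi)\le r$.

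Third, we establish weak lower semicontinuity of both terms along this subsequence. For the KL term this is the lower semicontinuity already used in the proof of Proposition~\ref{prop:KL_balls_compact}. It can be seen, for instance, from the Donsker--Varadhan type dual representation
\[
\kl(\rho\|\pi)=\sup_{\varphi\in C_b}\Bigl\{\int\varphi\,\d\rho-\int (e^{\varphi}-1)\,\d\pi\Bigr\},
\]
which is a supremum of weakly continuous functionals of $\rho$.

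For the WFR term, Proposition~\ref{thm:wfr_met_weak_conv} shows that $\nu_n\weakly\mu^\ast$ implies $\wfr_\alpha(\nu_n,\mu^\ast)\to0$. The triangle inequality for the metric $\wfr_\alpha$ then gives $\wfr_\alpha(\nu_n,\mu_k)\to\wfr_\alpha(\mu^\ast,\mu_k)$, which is actual continuity and not merely lower semicontinuity. Combining the two terms,
\[
J(\mu^\ast)\le\liminf_{n} J(\nu_n)=m,
\]
so $\mu^\ast$ attains the minimum.

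The main point requiring care is the compactness step, in particular that weak limits in $\mathscr{M}(\R^d)$ keep finite mass. This is guaranteed by the KL bound: since $\kl(\rho\|\pi)\ge \rho(\R^d)\log\frac{\rho(\R^d)}{\pi(\R^d)}-\rho(\R^d)+\pi(\R^d)$, bounded KL forces bounded mass, and tightness is inherited from $\pi$. Both facts are packaged in Proposition~\ref{prop:KL_balls_compact}, so no further work is needed there.

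The only other subtlety is that WFR continuity under weak convergence must be used for measures that are not probability measures and may lack second moments. This is exactly the generality of Proposition~\ref{thm:wfr_met_weak_conv}, so the argument goes through as stated.
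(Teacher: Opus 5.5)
Your proposal is correct and follows essentially the same route as the paper: a minimizing sequence, weak sequential compactness from Proposition~\ref{prop:KL_balls_compact}, weak continuity of $\wfr_\alpha^2(\cdot,\mu_k)$ via Proposition~\ref{thm:wfr_met_weak_conv}, and weak lower semicontinuity of $\kl(\cdot\,\|\,\pi)$. You also spell out two points the paper leaves implicit: the infimum is finite (test with $\mu=\mu_k$), so the minimizing sequence lies in a KL sublevel set of finite radius, and the triangle inequality turns metrization of weak convergence into continuity of the distance term.
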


\begin{proof}
Let $\phi_h(\mu_k)$ be the infimum in \eqref{eq:wrf_jko_basic},
and let $\nu_n\in\mathscr{M}(\R^d)$ constitute a minimizing sequence such that 
$$
\frac{1}{2h}\wfr_{\alpha}^2(\nu_n,\mu_k)+\kl(\nu_n\|\pi)\leq \phi_h(\mu_k)+\frac{1}{n}.
$$
By Proposition~\ref{prop:KL_balls_compact}, we know, up to subsequences, that $\nu_n$ converges weakly to some $\nu\in\mathscr{M}(\R^d)$. Further, we know by Proposition~\ref{thm:wfr_met_weak_conv} that $\wfr_{\alpha}^2(\cdot,\mu_k)$ is continuous wrt weak convergence. Further, $\kl(\cdot\|\pi)$ is lower semicontinuous wrt weak convergence.
Thus
$$
\frac{1}{2h}\wfr_{\alpha}^2(\nu,\mu_k)+\kl(\nu\|\pi)=\phi_h(\mu_k),
$$
which means that $\nu\in\argmin_{\mu}\frac{1}{2h}\wfr_{\alpha}^2(\mu,\mu_k)+\kl(\mu\|\pi)$.
\end{proof}

In  the rest of this paper, let 
$\pi > 0$ be integrable on $\R^d$. With slight abuse of notation we identify the function $\pi$ with the corresponding measure $\pi \, \d x$. 
For such $\pi$, the WFR JKO scheme becomes
\begin{equation}\label{eq:wrf_jko_basic}
\rho_{k+1}\in\argmin_{\rho\in\mathscr{M}(\R^d)} J(\rho), \quad J(\rho) \coloneqq \frac{1}{2h}\wfr_{\alpha}^2(\rho,\rho_k)+\kl(\rho\|\pi).
\end{equation}
Note that by  this definition $\kl(\rho_{k+1}\|\pi)$ is finite for $k\geq 0$ such that $\rho_{k+1}$ is absolutely continuous wrt $\pi$ and thus also wrt the Lebesgue measure. Therefore, we rely on densities $\rho$.

\begin{remark}[Gradient Flows]
Several papers study gradient flows of the KL divergence or other functionals with respect to the WFR metric, see, e.g., \cite{crucinio2025sequential,gallouet2017jko,lu2023birth,mielke2025hellinger,zhu2024kernel}. Formally, such gradient flows can be defined as solutions of the reaction diffusion equation
\begin{equation}\label{eq:wrf_gradient_flows}
\partial_t\mu_t-\div\left(\mu_t\nabla F'(\mu_t)\right)=-\alpha\mu_tF'(\mu_t),
\end{equation}
where $F'(\mu)=\frac{\delta \mathrm{KL}(\cdot,\pi)}{\delta\mu}=\log\left(\frac{\mu}{\pi}\right)+1$ is the first variation of the KL divergence.
The sequence $(\rho^h_k)_k$ generated by the WFR JKO scheme \eqref{eq:wrf_jko_basic} for some step size $h$ can be viewed as a time-discretized version of such curves. More precisely, the authors of \cite{fleissner2021minimizing, Laschos04032026} study limits of the curves $\mu_t^h\colon[0,\infty)\to\mathscr M(\R^d)$ defined as
$
\mu_{t}^h=\rho_k^h
$
for $t\in[kh,(k+1)h)$. In particular, it was shown by Fleißner \cite[Example 4.12]{fleissner2021minimizing} under mild assumptions that $\mu_t^h$ converges weakly to a weak solution of \eqref{eq:wrf_gradient_flows} as $h\to 0$. In contrast, we merely study the convergence of $(\rho_k^h)_k$ for fixed $h$ as $k\to\infty$. 
\end{remark}

\subsection{Convergence of WFR JKO for $k \to \infty$}\label{sec:convergence}

In this subsection, we prove exponential convergence of the sequence $\rho_k$ generated by the WFR JKO scheme \eqref{eq:wrf_jko_basic} towards $\pi$ for $k\to\infty$. We use the notation
$$
I(\rho)=\int\rho \log^2 \left(\frac{\rho}{\pi} \right) \d x.
$$
Then, the following descent lemma holds true.
The main idea of the proof is to bound the WFR metric by the FR metric and to insert the optimality condition of $\rho_{k+1}$ along the geometric interpolation between $\rho_{k+1}$ and $\pi$.

\begin{lemma}\label{lem:sufficient_decrease}
Let $(\rho_k)_k$ be generated by \eqref{eq:wrf_jko_basic}. Then, we have for any $k\geq0$ that
$$
\alpha I(\rho_{k+1})\leq \frac{1}{h^2}\wfr^2(\rho_{k+1},\rho_k).
$$
\end{lemma}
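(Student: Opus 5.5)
We compare $\rho_{k+1}$ with competitors obtained by a pure Fisher--Rao perturbation that moves $\rho_{k+1}$ toward $\pi$, and exploit the minimality of $J(\rho_{k+1})$ in \eqref{eq:wrf_jko_basic}. Write $\rho\coloneqq\rho_{k+1}$ and $W\coloneqq\wfr_\alpha(\rho,\rho_k)$. Since $\kl(\rho\|\pi)<\infty$, $\rho$ has a density and $\log(\rho/\pi)$ is defined $\rho$-a.e. Because $I(\rho)$ might be infinite, we first truncate. For $M>0$ set $f_M\coloneqq\max(-M,\min(M,\log(\rho/\pi)))$, and for $t\ge0$ set
\[
\nu_t\coloneqq \rho\, e^{-t f_M}.
\]
This is the geometric interpolation between $\rho$ and $\pi$, clipped at level $M$. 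It solves the reaction equation \eqref{eq:reaction} with $v\equiv0$ and bounded rate $g=-f_M$.

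\textbf{Step 1: transport cost.} Restricting the dynamic formulation \eqref{eq:WFR} to the curve $s\mapsto\nu_{st}$, $s\in[0,1]$, with rate $-tf_M$ gives
\[
\wfr_\alpha^2(\nu_t,\rho)\le \frac{t^2}{\alpha}\int_0^1\!\!\int f_M^2\, e^{-stf_M}\rho \d x\d s \le \frac{t^2}{\alpha}e^{tM} I_M,
\qquad I_M\coloneqq\int\rho f_M^2\d x .
\]
The triangle inequality for $\wfr_\alpha$ then yields
\[
\wfr_\alpha(\nu_t,\rho_k)\le W+t\,e^{tM/2}\sqrt{I_M/\alpha}.
\]

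\textbf{Step 2: first-order decrease of KL.} Next we expand $t\mapsto\kl(\nu_t\|\pi)=\int \nu_t\log(\nu_t/\pi)-\nu_t+\pi$. On each level set the integrand is smooth in $t$, and since $f_M$ is bounded, domination by a multiple of $\rho(1+|\log(\rho/\pi)|)\in L^1$ allows differentiation under the integral. This gives
\[
\frac{d}{dt}\Big|_{t=0}\kl(\nu_t\|\pi)=-\int \rho f_M\log\frac{\rho}{\pi}\d x\le -I_M ,
\]
where the inequality holds because $f_M\log(\rho/\pi)\ge f_M^2$ pointwise. Hence
\[
\kl(\nu_t\|\pi)\le\kl(\rho\|\pi)-tI_M+o(t).
\]
Justifying this domination, in particular integrability of $\rho\log(\rho/\pi)$ and of $\pi$ on the set where $\rho<\pi$, is the main technical point, and it is the reason for truncating.

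\textbf{Step 3: combine.} Minimality gives $J(\rho)\le J(\nu_t)$, that is,
\[
\frac{W^2}{2h}\le \frac{1}{2h}\bigl(W+te^{tM/2}\sqrt{I_M/\alpha}\bigr)^2-tI_M+o(t).
\]
Expanding the square, dividing by $t$ and letting $t\downarrow0$ yields
\[
I_M\le \frac{W}{h}\sqrt{I_M/\alpha},
\]
so $\alpha I_M\le W^2/h^2$. Finally, $I_M\uparrow I(\rho_{k+1})$ as $M\to\infty$ by monotone convergence, which gives the claim, including finiteness of $I(\rho_{k+1})$.
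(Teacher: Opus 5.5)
Your proof is correct and follows essentially the paper's route: perturb $\rho_{k+1}$ along the pure Fisher--Rao (geometric) interpolation toward $\pi$, bound the WFR cost by the reaction action, combine the triangle inequality with minimality of $J$, and pass to first order at the start of the curve. Clipping $\log(\rho_{k+1}/\pi)$ at level $M$ and then taking the monotone limit is a genuine improvement in rigour: it justifies differentiating under the integral and covers a possibly infinite $I(\rho_{k+1})$, both of which the paper's derivative comparison at $s=0$ leaves implicit.
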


\begin{proof}
For $s \in [0,1]$, let $\mu_s \coloneqq \rho_{k+1}^{1-s}\pi^s$. 
Then $\mu_{st}$, $t \in [0,1]$ fulfills $(\mu_{st})|_{t=0} = \rho_{k+1}$ and
$(\mu_{st})|_{t=1} = \mu_s$ and
$$
\partial_t \mu_{st} = s \log \left(\frac{\pi}{\rho_{k+1}}\right) \,  \mu_{st} = g_t \, \mu_{st}, \quad g_t \coloneqq s\log (a), \quad a \coloneqq \frac{\pi}{\rho_{k+1}}.
$$
Then we conclude that the $\wfr$ distance between $\rho_{k+1}$ and $\mu_s$ is bounded by
\begin{align*}
\wfr_\alpha^2(\rho_{k+1},\mu_s)
&\leq\int_0^1\int \frac{\mu_{st}}{\alpha}g_t^2\d x\d t
=\frac{s^2}{\alpha}\int_0^1\int \pi^{st}\rho_{k+1}^{1-st}\log^2(a)  \, \d x\d t\\
&= \frac{s^2}{\alpha} \int \rho_{k+1}\log^2(a) \int_0^1 a^{st} \, \d t \d x
=\frac{s}{\alpha}\int \rho_{k+1}\log(a)\left( a^{s}-1\right) \, \d x .
\end{align*}
Next, we consider the function
$H(s)\coloneqq\kl(\rho_{k+1}\|\pi)-\kl(\mu_s\|\pi)$. We observe that $H(0)=0$ such that $H(s)=\int_0^sH'(t) \d t$. 
We compute $H'(t)$ by
\begin{align*}
H'(t)&=-\frac{\d}{\d t} \kl(\mu_t\|\pi)
=\frac{\d}{\d t} \int \rho_{k+1} \left(
(1-t)\log(a) \, a^t+ a^t \right)  \d x \\
&=(1-t)\int\rho_{k+1} \log^2 (a) \, a^t \, \d x\geq 0.
\end{align*}
In particular, since the integrant is positive, we obtain
\begin{align*}
H(s)&=\int_0^sH'(t)\d t
\geq (1-s) \int \rho_{k+1} \log^2 (a) \int_0^1 a^t \, \d t\d x\\
&=(1-s)\int \rho_{k+1}\log(a) (a^s-1) \, \d x\\
&\geq \frac{(1-s)\alpha}{s} \wfr_\alpha^2(\rho_{k+1},\mu_s).
\end{align*}
By the optimality of $\rho_{k+1}$ and applying the triangle inequality in the first summand, we get 
\begin{align*}
0&\leq \frac{1}{2h}\wfr_\alpha^2(\mu_s,\rho_k)-\frac{1}{2h}\wfr_\alpha^2(\rho_{k+1},\rho_k)-H(s)\\
&\leq \frac{1}{2h}\wfr_\alpha^2(\rho_{k+1},\mu_s)+\frac{1}{h}\wfr_\alpha(\rho_{k+1},\mu_s)  \wfr_\alpha(\rho_{k+1},\rho_k)-H(s)\\
&\leq \left(\frac{s}{2h(1-s)\alpha}-1 \right) H(s)+\frac{1}{h}\sqrt{\frac{s}{(1-s)\alpha} H(s)} \wfr_\alpha(\rho_{k+1},\rho_k).
\end{align*}
Division by $\sqrt{H(s)}$ gives
$$
\left(1-\frac{s}{2h(1-s)\alpha} \right) \sqrt{H(s)}\leq \frac{1}{h}\sqrt{\frac{s}{(1-s)\alpha}} \wfr_\alpha(\rho_{k+1},\rho_k).
$$
Since for $0 \le s < 2h \alpha /(1+2h \alpha)$ both sides are nonnegative, taking  squares  yields
$$
\left(1-\frac{s}{2h(1-s)\alpha} \right)^2 H(s)\leq \frac{1}{h^2}\frac{s}{(1-s)\alpha} \wfr_\alpha^2(\rho_{k+1},\rho_k).
$$
Since for $s=0$ both sides are zero, we have that the derivative of the left side must be smaller or equal than the derivative on the right side, i.e.,
$$
H'(0)\leq \frac{1}{\alpha h^2}\wfr_\alpha^2(\rho_{k+1},\rho_k)
$$
Noting that $H'(0)=I(\rho_{k+1})$ yields the assertion.
\end{proof}

The next lemma shows that $\rho_{k}/\pi$  remains bounded from below.

\begin{lemma}\label{lem:ratio_bound}
Let $(\rho_k)_k$ be generated by \eqref{eq:wrf_jko_basic}.
Then, it holds for all $k \in \mathbb N$ that $\rho_{k} \geq \exp(-\frac{2}{\alpha h})\pi$ a.e..
\end{lemma}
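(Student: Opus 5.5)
The plan is to argue by contradiction using a local perturbation of the minimizer $\rho_{k+1}$ on the set where it is too small. We fix $k\ge 0$ and set $c\coloneqq \exp(-\frac{2}{\alpha h})$. Suppose the set $A\coloneqq\{\rho_{k+1}<c\,\pi\}$ has positive Lebesgue measure; we may restrict to a bounded subset, so that $\pi(A)<\infty$. The perturbation adds mass on $A$ by a pure reaction (Fisher--Rao) path, $\mu^\varepsilon \coloneqq \rho_{k+1}+\varepsilon\pi\mathbb 1_A$. We then show that $J(\mu^\varepsilon)<J(\rho_{k+1})$ for small $\varepsilon>0$, which contradicts the optimality in \eqref{eq:wrf_jko_basic}. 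The statement for $k\ge1$ follows since $\rho_k=\rho_{(k-1)+1}$.

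The first step is the KL part. Writing $r=\rho_{k+1}/\pi$, the integrand of $\kl(\cdot\|\pi)$ is $\pi(r\log r-r+1)$, which has derivative $\log r$ in $r$. Hence
$$\frac{\d}{\d\varepsilon}\Big|_{\varepsilon=0^+}\kl(\mu^\varepsilon\|\pi)=\int_A \pi\log r\,\d x\le \pi(A)\log c=-\frac{2}{\alpha h}\pi(A).$$
This quantity may equal $-\infty$ where $r=0$, which only helps: by convexity the difference quotient is monotone, so the one-sided derivative exists in $[-\infty,\cdot)$.

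The second step, which is the main obstacle, is the transport-cost part. By the triangle inequality,
$$\wfr_\alpha^2(\mu^\varepsilon,\rho_k)-\wfr_\alpha^2(\rho_{k+1},\rho_k)\le 2\wfr_\alpha(\rho_{k+1},\rho_k)\wfr_\alpha(\rho_{k+1},\mu^\varepsilon)+\wfr_\alpha^2(\rho_{k+1},\mu^\varepsilon).$$
We bound $\wfr_\alpha(\rho_{k+1},\mu^\varepsilon)$ by a Fisher--Rao path as in Lemma~\ref{lem:sufficient_decrease}, namely $\wfr_\alpha^2\le \frac1\alpha\fr^2$ with $\fr^2(\rho,\rho+\varepsilon\pi\mathbb 1_A)=4\int_A(\sqrt{\rho+\varepsilon\pi}-\sqrt\rho)^2\le 4\varepsilon\pi(A)$. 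This gives only order $\sqrt\varepsilon$ for the distance, and the cross term is then of order $\sqrt\varepsilon$, which would dominate the linear KL gain. To circumvent this, I would instead use the bound on $\wfr_\alpha(\rho_{k+1},\rho_k)$ together with a smarter comparison: replace $\rho_k$-coupling by concatenation. Take a near-optimal WFR path from $\rho_k$ to $\rho_{k+1}$ and modify its reaction rate by $\log(1+\varepsilon\pi\mathbb 1_A/\rho_{k+1})$ at the final time, or more cleanly multiply the whole path by the factor $w^\varepsilon=(\mu^\varepsilon/\rho_{k+1})^{t}$ transported along it. Controlling how this factor is transported is the delicate point. If that fails, I would fall back to the bound $\wfr_\alpha^2(\rho_{k+1},\rho_k)\le \frac{1}{\alpha}\fr^2$-type a priori estimates from the LET formula \eqref{eq:let_problem}. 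In that formula, increasing the second marginal target by $\varepsilon\pi\mathbb 1_A$ changes only $\kl(\gamma_1\|\mu)$ by the explicit first-order term $\varepsilon\pi(A)-\int_A \varepsilon\pi\,\d\gamma_1/\d\mu$ with the optimal plan fixed. This yields
$$\frac{\d}{\d\varepsilon}\Big|_{0^+}\frac{1}{2h}\wfr_\alpha^2(\mu^\varepsilon,\rho_k)\le \frac{1}{2h}\cdot\frac{4}{\alpha}\pi(A)=\frac{2}{\alpha h}\pi(A),$$
using that the density ratio $\d\gamma_1/\d\mu\ge 0$. This is exactly where the constant $\frac{2}{\alpha h}$ comes from, so the LET route is the one I would commit to.

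In the final step we combine the two estimates. The derivative of $J(\mu^\varepsilon)$ at $0^+$ is at most $\frac{2}{\alpha h}\pi(A)+\int_A\pi\log r$. This is strictly negative if $\log r<-\frac{2}{\alpha h}$ on a set of positive measure, which is the case on $A$. It contradicts minimality, so $\rho_{k+1}\ge c\pi$ almost everywhere. A minor point is justifying the sign convention in the LET formula (which marginal is $\mu^\varepsilon$); by symmetry of $\wfr_\alpha$ we may place $\mu^\varepsilon$ in whichever slot is convenient.
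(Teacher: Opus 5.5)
Your committed LET argument is correct and is essentially the paper's proof. You fix the optimal plan $\gamma$ to get an upper bound for $J$ that is tight at $\rho_{k+1}$, and you add mass on $A$. This raises the $\kl(\gamma_1\|\cdot)$ term by at most the added mass (already at the level of finite differences, so no derivative of $\wfr_\alpha^2$ is needed), while $\kl(\cdot\,\|\,\pi)$ drops by strictly more than $\frac{2}{\alpha h}$ times that mass because $\log(\rho_{k+1}/\pi)<-\frac{2}{\alpha h}$ on $A$. The paper differs only in using the finite perturbation $\rho_{k+1}+\frac12 w$, $w=(e^{-2/(\alpha h)}\pi-\rho_{k+1})\mathbf{1}_A$, with the convexity inequality instead of one-sided derivatives in $\varepsilon$; your triangle-inequality and concatenation detours can simply be dropped.
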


\begin{proof}
Let $\gamma$ be the optimal plan in the LET problem \eqref{eq:let_problem} between the marginals $\rho_{k+1}$ and $\rho_k$. 
We note that its marginals $\gamma_1$ and $\gamma_2$ are absoulutely continuous, since $\kl(\gamma_1\|\rho_{k+1})$ is finite. We define the energy
$$
\mathcal E(\rho)=\frac{2}{h\alpha}\left( \kl(\gamma_1\|\rho)+\kl(\gamma_2\|\rho_k)+\int\ell_\alpha\d \gamma \right) +\kl(\rho\|\pi).
$$
By definition, we have $\rho_{k+1}\in\argmin_{\rho\in\mathscr{M}(\R^d)} J(\rho)$. Further, by \eqref{eq:let_problem}, it holds 
$J(\rho_{k+1})= \mathcal E(\rho_{k+1})$ and 
$J(\rho)\leq \mathcal E(\rho)$ for all $\rho\in\mathscr{M}(\R^d)$.

Now we define the set $A\coloneqq\{x\in\R^d:\rho_{k+1}(x)<\exp(-\frac{2}{\alpha h})\pi(x)\}$ and show that $\pi(A)=0$. Assume by contradiction that $\pi(A)>0$ and define $\rho^*$ by 
$$
\rho^* \coloneqq\rho_{k+1}+\frac12 w,\qquad w(x)\coloneqq
\begin{cases}
\exp(-\frac{2}{h\alpha})\pi(x)-\rho_{k+1}(x),&\text{if }x\in A,\\
0,&\text{if }x\not\in A.
\end{cases}
$$
Note that by definition of $A$, the function $w$ is non-negative and fulfills 
$$\int w \, \d x\leq \exp(-\frac{2}{h\alpha})\pi(\R^d)<\infty$$ 
such that 
$\rho^* \in\mathscr{M}(\R^d)$. Next, we will bound 
\begin{equation}\label{eq:energy_difference}
\mathcal E(\rho^*)-\mathcal E(\rho_{k+1})=\frac{2}{h\alpha}(\kl(\gamma_1\|\rho^*)-\kl(\gamma_1\|\rho_{k+1}))+\kl(\rho^*\|\pi)-\kl(\rho_{k+1}\|\pi).
\end{equation}
To bound the first difference, we compute
\begin{align}
\kl(\gamma_1\|\rho^*)-\kl(\gamma_1\|\rho_{k+1})
&=\int \gamma_1 \log\left(\frac{\rho_{k+1}}{\rho^*} \right) + \rho^*-\rho_{k+1} \, \d x\\
&=\int \gamma_1 \log \left(\frac{\rho_{k+1}}{\rho_{k+1}+\frac12 w} \right) + \frac12 w \, \d x
\leq \frac12 \int w\,\d x, \label{eq:bound1}
\end{align}
where the last step follows from the fact that $w\geq 0$.
To bound the second difference, we use the nonnegative, convex function $\varphi(r) \coloneqq r \log r -r +1$ for $r \ge 0$. Then we have $\varphi(a)-\varphi(b)\leq \varphi'(a)(a-b)$ and $\varphi'(r)=\log(r)$ such that
\begin{align*}
\kl(\rho^*\|\pi)-\kl(\rho_{k+1}\|\pi)
&=\int \pi \left(\varphi\left(\frac{\rho^*}{\pi} \right)
-
\varphi\left (\frac{\rho_{k+1}}{\pi} \right) \right) \, \d x\\
&\leq 
\int \pi \left(\varphi' \left(\frac{\rho^*}{\pi} \right) \frac{\rho^*-\rho_{k+1}}{\pi} \right) \, \d x\\
&=
\frac12 \int \log\left(\frac{\rho_{k+1}+\frac12 w}{\pi} \right) w \, \d x.
\end{align*}
On $A$, we have  $\rho_{k+1}=\exp(-\frac{2}{h\alpha})\pi-w$ and consequently 
$$
\log\left(\frac{\rho_{k+1}(x)+\frac12 w(x)}{\pi(x)} \right)w(x)
=
\log \left(\exp\left(-\frac{2}{h\alpha} \right)-\frac{w(x)}{2\pi(x)} \right)w(x), \quad x \in A.$$ 
Outside of $A$, we have that $w(x)=0$ such that the same equality holds. Together, we obtain 
\begin{align*}
\frac12 \int \log\left(\frac{\rho_{k+1}+\frac12 w}{\pi} \right) w \, \d x
&= 
\frac12\int \log\left(\exp(-\frac{2}{h\alpha})-\frac{w}{2\pi} \right)w \, \d x\\
&=
\frac12\int -\frac{2}{h\alpha}w + 
\log\left(1-\frac{\exp(\frac{2}{h\alpha})w}{2\pi} \right) w \, \d x\\
&=
-\frac{1}{h\alpha}\int w\,\d x+\frac12\int\log\left(1-\frac{\exp(\frac{2}{h\alpha}) w}{2\pi} \right) w \, \d x
< -\frac{1}{h\alpha}\int w\,\d x,
\end{align*}
where the last inequality follows from our assumption $\pi(A)>0$ (implying $|A|>0$) and the fact that $w>0$ on $A$ and the second integrand is strictly negative, whenever $w>0$. In summary, we obtain
$$
\kl(\rho^*\|\pi)-\kl(\rho_{k+1}\|\pi)<-\frac{1}{h\alpha}\int w\,\d x
$$
Inserting this together with \eqref{eq:bound1} in \eqref{eq:energy_difference}, we obtain that $\mathcal E(\rho^*)-\mathcal E(\rho_{k+1})<0$. In particular, we have
$$
J(\rho^*)\leq \mathcal E(\rho^*)<\mathcal E(\rho_{k+1})= J(\rho_{k+1}),
$$
which contradicts the assumption that $\rho_{k+1}$ minimizes $J$. Thus, we conclude $\pi(A)=0$.
\end{proof}

Using the two lemmas, we can  prove our final convergence result. 

\begin{theorem}\label{thm:main}
Let $\pi \in \mathscr{M}(\R^d)$ with $\pi >0$ and $\rho_0 \in \mathscr{M}(\R^d)$. 
$(\rho_k)_k$ be generated by \eqref{eq:wrf_jko_basic}. 
Then, it holds
$$
\kl(\rho_{k+1}\|\pi)\leq \frac{1}{1+c}\kl(\rho_k\|\pi),
$$
such that
$$\kl(\rho_{k+1}\|\pi)\leq (1+c)^{-k}\kl(\rho_0\|\pi)
$$
for 
$$
c=\frac{\beta}{\exp(\beta)-\beta-1},\quad\text{where}\quad\beta=\frac{2}{\alpha h}.
$$
\end{theorem}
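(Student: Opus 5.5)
The plan is to combine three ingredients: the energy decrease given by the minimality in \eqref{eq:wrf_jko_basic}, the bound of Lemma~\ref{lem:sufficient_decrease}, and the lower bound of Lemma~\ref{lem:ratio_bound}. Together they should yield a pointwise functional inequality between $I(\rho)$ and $\kl(\rho\|\pi)$. If $\kl(\rho_k\|\pi)=\infty$ there is nothing to prove, so assume it is finite.

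\emph{Step 1.} Comparing $J(\rho_{k+1})$ with $J(\rho_k)$, i.e.\ using $\rho=\rho_k$ as a competitor, gives
$$\kl(\rho_{k+1}\|\pi)+\frac{1}{2h}\wfr_\alpha^2(\rho_{k+1},\rho_k)\le \kl(\rho_k\|\pi).$$
\emph{Step 2.} Lemma~\ref{lem:sufficient_decrease} gives $\frac{1}{2h}\wfr_\alpha^2(\rho_{k+1},\rho_k)\ge \frac{\alpha h}{2} I(\rho_{k+1})=\frac1\beta I(\rho_{k+1})$. Combining the two steps,
$$\kl(\rho_k\|\pi)\ge \kl(\rho_{k+1}\|\pi)+\frac1\beta I(\rho_{k+1}).$$
It therefore suffices to show
$$I(\rho_{k+1})\ge C\,\kl(\rho_{k+1}\|\pi),\qquad C=\frac{\beta^2}{e^\beta-\beta-1},$$
because then $C/\beta=c$ and the claimed contraction follows. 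The $k$-fold estimate is then obtained by iterating.

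\emph{Step 3: functional inequality.} Write $r=\rho_{k+1}/\pi$. Then
$$\kl(\rho_{k+1}\|\pi)=\int \pi\,(r\log r-r+1)\,\d x,\qquad I(\rho_{k+1})=\int \pi\, r\log^2 r\,\d x.$$
By Lemma~\ref{lem:ratio_bound}, applied to $\rho_{k+1}$ with $k+1\ge1$, we have $r\ge e^{-\beta}$ a.e. So it suffices to prove the pointwise inequality $r\log^2 r\ge C(r\log r-r+1)$ for all $r\ge e^{-\beta}$. Set $u=\log r\ge-\beta$ and split into two cases.

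\begin{itemize}
\item For $u\ge0$: let $g(u)=u^2e^u-2(ue^u-e^u+1)$. Then $g(0)=0$ and $g'(u)=u^2e^u\ge0$, so $u^2e^u\ge 2(ue^u-e^u+1)$. Moreover $C\le2$, because $e^\beta-\beta-1\ge\beta^2/2$. Hence the inequality holds in this range.
\item For $u=-v$ with $v\in(0,\beta]$: multiplying by $e^{v}$, the inequality becomes $v^2\ge C(e^v-v-1)$. The ratio $v^2/(e^v-v-1)=\bigl(\sum_{n\ge2}v^{n-2}/n!\bigr)^{-1}$ is decreasing in $v$. Its minimum on $(0,\beta]$ is therefore attained at $v=\beta$, where it equals exactly $C$.
\end{itemize}

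I expect the main care to be needed in Step 3: identifying the sharp constant through the endpoint $u=-\beta$, and checking that the region $u\ge0$ does not spoil it. The monotonicity argument above settles both points. The remaining steps are bookkeeping.
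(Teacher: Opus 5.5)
Your proposal is correct and follows the paper's proof. Both combine the minimality of $\rho_{k+1}$ against $\rho_k$, Lemma~\ref{lem:sufficient_decrease}, and the ratio bound of Lemma~\ref{lem:ratio_bound} into the pointwise estimate $\kl(\rho_{k+1}\|\pi)\le \psi(\beta)\,I(\rho_{k+1})$ with $\psi(t)=(e^t-1-t)/t^2$; the only difference is that the paper writes $r\log r-r+1=\psi(-\log r)\,r\log^2 r$ and uses that $\psi$ is increasing on all of $\R$, whereas your case split ($u\ge 0$ via $C\le 2$, $u<0$ via the power series) proves the same bound without needing monotonicity of $\psi$ on the negative axis.
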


\begin{proof}
The function $\psi(r)\coloneqq\frac{\exp(r)-1-r}{r^2}$ is strictly increasing on $\R$
and is related to $\varphi(r) \coloneqq r\log(r) - r + 1$  by
$$
\varphi(r) = \psi\left(-\log(r) \right)r\log^2(r), \quad r \ge 0.
$$
Then we have 
\begin{align*}
\kl(\rho_{k+1}\|\pi)
=
\int \pi \varphi\left(\frac{\rho_{k+1}}{\pi} \right) \, \d x
=
\int \rho_{k+1} \log^2 \left(\frac{\rho_{k+1}}{\pi} \right)  \psi\left(-\log \left(\frac{\rho_{k+1}}{\pi} \right) \right) \, \d x.
\end{align*}
By Lemma~\ref{lem:ratio_bound} we know  that $\frac{\rho_{k+1}}{\pi}\geq \exp(-\frac{2}{\alpha h})$, i.e.,
$-\log \left(\frac{\rho_{k+1}}{\pi} \right) \le \frac{2}{\alpha h}$
and  since $\psi$ is increasing, this implies
\begin{equation}\label{eq:kl_bound}
\kl(\rho_{k+1}\|\pi)
\leq 
\int \rho_{k+1} \log^2\left(\frac{\rho_{k+1}}{\pi}\right)
\psi\left(\frac{2}{\alpha h} \right) \, \d x
=
\psi\left(\frac{2}{\alpha h} \right) I(\rho_{k+1} ).
\end{equation}
By definition $\rho_{k+1}$ minimizes $J$ in \eqref{eq:wrf_jko_basic} such that $J(\rho_{k+1})\leq J(\rho_k)$. Thus, using Lemma~\ref{lem:sufficient_decrease} and \eqref{eq:kl_bound},  we obtain the estimate
\begin{align}
\kl(\rho_k\|\pi)&\geq \frac{1}{2h}\wfr_\alpha^2(\rho_{k+1},\rho_k)+\kl(\rho_{k+1}\|\pi)\\
&\geq\frac{\alpha h}{2}I(\rho_{k+1})+\kl(\rho_{k+1}\|\pi)\\
&\geq \left(\frac{\alpha h}{2 \psi(\frac{2}{\alpha h})} + 1\right)\kl(\rho_{k+1}\|\pi).
\end{align}
This yields the claim with 
$$
c = \frac{\alpha h}{2 \psi(\frac{2}{\alpha h})}=\frac{\beta}{\exp(\beta)-\beta-1} >0.
$$
This finishes the proof.
\end{proof}

\section{Normalizing Flow via WFR JKO Scheme} \label{sec_norm_glows}

Next, we derive a computable loss function for training the velocity and reaction function in a single WFR--JKO step. Afterwards, we summarize the algorithm in order to simulate the whole scheme.

\subsection{Loss Function}
We consider the admissible class
\begin{equation*}
\mathcal{A} \coloneqq 
\left\{(v,g)\colon 
\begin{aligned}
&v\colon[0,1]\times\mathbb{R}^{d}\to\mathbb{R}^{d},\;
g\colon[0,1]\times\mathbb{R}^{d}\to\mathbb{R}~\text{ measurable }\\
&\text{ and fulfill } \eqref{lip_v},
\eqref{eq:reaction:regularity} \text{ and }
\eqref{eq:reaction:regularity_add}
\end{aligned}
\right\}.
\end{equation*}
By Proposition~\ref{prop:solution:continuity:reaction}, we know that for $(v,g)\in\mathcal{A}$, the weakly continuous curve $\mu\colon[0,1]\to\mathscr{M}(\mathbb{R}^{d})$ satisfying the continuity equation with reaction~\eqref{eq:continuity:reaction} is uniquely determined. Therefore, restricting the JKO step~\eqref{eq:wrf_jko_basic} to $(v,g) \in \mathcal A$ leads to the problem
\begin{equation}\label{eq:obj:WFR:vg}
\argmin_{(v_t,g_t)\in\mathcal{A}} \mathcal L_{k}(v,g), \quad 
\mathcal L_{k}(v,g) \coloneqq \frac{1}{2h}\int_{0}^{1}\int |v_{t}|^{2}+\frac{1}{\alpha}g_{t}^{2} \,\d\mu_{t}\d t +\kl(\mu_{1} \,\|\, \pi),
\end{equation}
where $\mu_{t} = \mu_t^{v_t,g_t}$ is the unique solution of the   the continuity equation with reaction~\eqref{eq:continuity:reaction} starting in $\mu_0 = \rho_k$.

In this section, we provide a computable formulation for the objective~\eqref{eq:obj:WFR:vg} of a WFR JKO step. 
Note that Wasserstein--Fisher--Rao schemes for neural sampling were considered, e.g. in~\citep{chemseddine2025neural}. The following proposition reformulates the action term in~\eqref{eq:obj:WFR:vg}. The proof follows standard lines in the normalizing flow literature, see, e.g.
\cite{Xu2023Normalizing,HHS2022}.

\begin{proposition}\label{proposition:joint:obj:W}
Let $(v,g) \in \mathcal A$. Further, let $\phi(t,x)$ be the solution of the ODE \eqref{eq:flow_ode} with vector field $v_t$ and 
$\mu_t \coloneqq \phi(t, \cdot)_\# (w_{t}\rho_k)$ with weight $w_t$ from \eqref{eq:nu}. Let $\pi(x)  = \exp(-V(x))$ with $Z \coloneqq \int \exp{-V(x)} \, \d x < \infty$.
Then it holds 
\begin{align}
\mathcal{L}_{k}(v,g) 
&= \frac{1}{2h}\int_{0}^{1}\mathbb{E}\Bigl[ \rho_{k}(\mathbb{R}^{d}) w_{t}(X_{k})\left(|v_{t}\circ\phi_{t}(X_{k})|^{2}
+\frac{1}{\alpha} g_{t}^2\circ\phi_{t}(X_{k}) \right)\Bigr]\,\d t  \\
&\quad +\mathbb{E}\Bigl[\rho_{k}(\R^d) \, w_{1}(X_{k}) \Bigl(\log\rho_{k}(X_{k})+\log w_{1}(X_{k}) +V\circ\phi_{1}(X_{k})-1\Bigr) \Bigr] \nonumber \\
&\quad -\int_{0}^{1}\mathbb{E}\Big[\rho_{k}(\R^d) \, w_{1}(X_{k}) (\div v_{t}) \circ\phi_{t}(X_{k})\Bigr]\,\mathrm{d}t + \pi(\R^d),
\end{align}
where the expectation is taken with respect to $X_{k}\sim\rho_{k}(\mathbb{R}^{d})^{-1}\rho_{k}$
\end{proposition}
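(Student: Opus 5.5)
The plan is to treat the action term and the KL term of $\mathcal L_k$ in \eqref{eq:obj:WFR:vg} separately. Both rest on the push-forward representation $\mu_t=\phi(t,\cdot)_\#(w_t\rho_k)$ from Proposition~\ref{prop:solution:continuity:reaction}. For every measurable $f\ge0$ (or $f$ integrable) this representation gives
\begin{equation}
\int f\,\d\mu_t=\int f(\phi_t(x))\,w_t(x)\rho_k(x)\,\d x=\rho_k(\R^d)\,\mathbb E\bigl[w_t(X_k)\,f\circ\phi_t(X_k)\bigr],\qquad X_k\sim\rho_k(\R^d)^{-1}\rho_k .
\end{equation}

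\textbf{Action term.} I will apply the identity above with $f=|v_t|^2+\frac1\alpha g_t^2\ge0$. Integrating in $t$ (Tonelli, since the integrand is nonnegative) gives the first line of the claim directly.

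\textbf{KL term.} First I compute the density of $\mu_1$ by the change of variables formula, using that $\phi_1$ is a $C^1$-diffeomorphism (Proposition~\ref{prop:basic_w}). This gives
\begin{equation}
\mu_1(\phi_1(x))\,\det\nabla\phi_1(x)=w_1(x)\rho_k(x).
\end{equation}
Combining this with the Liouville formula \eqref{eq:det} yields
\begin{equation}
\log\mu_1(\phi_1(x))=\log\rho_k(x)+\log w_1(x)-\int_0^1(\div v_s)(\phi_s(x))\,\d s .
\end{equation}
Next I expand, with $\log\pi=-V$,
\begin{equation}
\kl(\mu_1\|\pi)=\int\bigl(\log\mu_1+V-1\bigr)\,\d\mu_1+\pi(\R^d).
\end{equation}
I then apply the push-forward identity at $t=1$ with $f=\log\mu_1+V-1$ and insert the expression for $\log\mu_1\circ\phi_1$. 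This produces the second line of the claim together with the divergence term. Finally, swapping $\int_0^1$ and $\mathbb E$ in the divergence term by Fubini gives the third line. Note that the weight there is $w_1$, not $w_t$, because it comes from the density of $\mu_1$.

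\textbf{Main obstacle: justifying the splitting of the KL integral.} By \eqref{eq:reaction:regularity}, $\sup|g_t|$ is integrable in $t$, so $\log w_1$ is bounded and $w_1$ is bounded above and below by positive constants. By \eqref{lip_v} and Rademacher's theorem, $\div v_t$ exists a.e. with $|\div v_t|\le d\,\mathrm{Lip}(v_t)$, so the divergence term is finite; for the Liouville formula with merely Lipschitz $v$ I would rely on the a.e.\ version from Proposition~\ref{prop:basic_w}. The mass term $\rho_k(\R^d)\mathbb E[w_1]$ is then finite. The only genuinely delicate pieces are $\mathbb E[w_1\log\rho_k]$ and $\mathbb E[w_1 V\circ\phi_1]$, which need not be separately integrable. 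I would handle them in one of two ways. The first is to combine them, using $\kl(\rho_k\|\pi)<\infty$ (guaranteed by the scheme) and the boundedness of $w_1$ and of the Jacobian, so that their sum is quasi-integrable. The second is to state the identity in $(-\infty,+\infty]$, with both sides infinite simultaneously. Once these integrability issues are settled, the remaining steps are routine bookkeeping.
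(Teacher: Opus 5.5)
Your proposal is correct and follows essentially the same route as the paper. Both use the push-forward representation of $\mu_t$ for the action term, the change of variables plus the Liouville formula \eqref{eq:det} for $\log(\mu_1\circ\phi_1)$, and a split of $\kl(\mu_1\|\pi)$ into entropy, potential and mass terms. Your integrability discussion goes beyond the paper, which splits the integrals without comment. Note that the quasi-integrability of the combined bracket follows most directly from $\mu_1$ itself rather than from $\kl(\rho_k\|\pi)<\infty$: $\int\mu_1\bigl(\log(\mu_1/\pi)\bigr)_-\d x\le\pi(\R^d)/e$, and $\log w_1$ and $\int_0^1(\div v_t)\circ\phi_t\,\d t$ are bounded.
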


\begin{proof}
First, we have, for any $t\in[0,1]$, by definition of the push-forward operator that
\begin{equation}\label{eq:helper}
\int_{\mathbb{R}^{d}}|v_{t}|^{2}+\frac{1}{\alpha}g_{t}^{2} \,\mathrm{d}\mu_{t} 
= 
\mathbb{E} \Bigl[\rho_{k}(\mathbb{R}^{d}) w_{t}(X_{k}) \left(
|v_{t}\circ\phi_{t}(X_{k})|^{2}+\frac{1}{\alpha}(g_{t}\circ\phi_{t}(X_{k}))^{2}\right) \Bigr].
\end{equation}

Second, we obtain by definition of the KL divergence that
\begin{equation}\label{eq:proposition:kl:WFR:1}
\kl(\mu_{1} \,\|\, \pi) 
= \underbrace{\int_{\mathbb{R}^{d}}\log\mu_{1}\,\mathrm{d}\mu_{1}}_{A_{1}}+\underbrace{\int_{\mathbb{R}^{d}}V\,\mathrm{d}\mu_{1}}_{A_{2}}-\underbrace{\mu_{1}(\mathbb{R}^{d})}_{A_{3}}+\pi(\mathbb{R}^{d}).
\end{equation}
First, we consider the summand $A_{1}$. For the density of the push-forward measure, we have by the change of variable formula and \eqref{eq:det} that
\begin{align}
\mu_{1}\circ\phi_{1} = \frac{w_1 \rho_k}{\det \nabla \phi_1} = w_1 \rho_k \exp\left( -\int_0^1 (\div v_t)\circ \phi_t \, \d t\right),
\end{align}
so that 
\begin{align}\label{eq:proposition:kl:WFR:2}
\log (\mu_{1}\circ\phi_{1})
&= \log\rho_{k}+\log w_{1} -\int_{0}^{1} (\div v_t)\circ \phi_t \,\mathrm{d}t \end{align} 
and
\begin{align}
\int \log\mu_{1}\,\mathrm{d}\mu_{1}
&= \int \log(\mu_{1}\circ\phi_{1}) w_{1}\,\mathrm{d}\rho_{k} \nonumber \\
&= \int\Bigl(\log\rho_{k}+\log w_{1}\Bigr)w_{1}\,\mathrm{d}\rho_{k} 
 -
 \int_{0}^{1}\Bigl(\int (\div v_t)\circ \phi_t w_{1}\,\mathrm{d}\rho_{k}\Bigr)\,\mathrm{d}t \nonumber \\
&= \mathbb{E}\Bigl[\Bigl(\log\rho_{k}(X_{k})+\log w_{1}(X_{k})\Bigr)w_{1}(X_{k}) \rho_k(\R^d)\Bigr] \nonumber \\
&\quad -\int_{0}^{1}\mathbb{E}\Big[(\div v_t)\circ \phi_t(X_{k})w_{1}(X_{k})  \rho_k(\R^d)\Bigr]\,\mathrm{d}t. \label{eq:proposition:kl:WFR:4}
\end{align}
For the summand $A_{2}$,  it follows by definition of the push-forward operator that 
\begin{equation}\label{eq:proposition:kl:WFR:6}
\int V \,\mathrm{d}\mu_{1} = \int  w_{1} (V \circ \phi_{1}) \,\mathrm{d}\rho_{k} = \mathbb{E}\bigl[ \rho_{k}(\R^d) \, w_{1}(X_{k})V(\phi_{1}(X_{k}))\bigr], 
\end{equation}
and the summand $A_{3}$ similarly
\begin{equation}\label{eq:proposition:kl:WFR:7}
\mu_{1}(\mathbb{R}^{d}) 
=\mathbb{E}[\rho_{k}(\R^d) \, w_{1}(X_{k})].
\end{equation} 
In summary, this gives
\begin{align*}
\kl(\nu_{1} \,\|\, \pi) 
&= \mathbb{E}\Bigl[ \rho_{k}(\R^d) \, w_{1}(X_{k}) \Bigl(\log\rho_{k}(X_{k})+\log w_{1}(X_{k})-\log\rho_{k}(\mathbb{R}^{d})+V(\phi_{1}(X_{k}))-1\Bigr)\Bigr] \nonumber \\
&\quad -\int_{0}^{1}\mathbb{E}\Big[(\div v_t)\circ \phi_t(X_{k})w_{1}(X_{k}) \rho_k(\R^d)\Bigr]\,\mathrm{d}t
\end{align*}
and adding \eqref{eq:helper} yields the assertion
\end{proof}

\begin{remark}
  Let $\xi\sim\mathrm{Rad}^d$ be Rademacher distributed, meaning that its components are iid
  $\xi_i \sim \text{Unif}\{-1,1\}$, and let $\xi$ be independent of $X_{k}$. Then Hutchinson's stochastic trace estimator \cite{Hutchinson1989} yields 
\begin{equation}\label{eq:proposition:kl:WFR:5}
\mathbb{E}\Bigl[\rho_{k}(\R^d) \,w_{1}(X_{k})(\div v_t)\circ \phi_t(X_{k})\Bigr] = \mathbb{E}\bigl[\rho_{k}(\R^d) \,w_{1}(X_{k})\xi^{\top}\nabla v_{t}\circ\phi_{t}(X_{k})\xi\bigr],
\end{equation}
where the expectation on the right-hand side is taken with respect to $(X_{k},\xi)\sim(\rho_{k}(\mathbb{R}^{d})^{-1}\rho_{k})\otimes\mathrm{Rad}$.    Alternatively, the Rademacher distribution could be replaced by the standard Gaussian one, but
$$
\mathbb E_{\text{Gauss}}[(\xi^{\top} A \xi- \text{trace}(A))^2] \ge \mathbb E_{\text{Rad}^d}[(\xi^{\top} A \xi- \text{trace}(A))^2], 
$$
see, e.g., \cite{MT2020}. 
\end{remark}

By Propositions~\ref{proposition:joint:obj:W}, we suggest the 
following objective function to learn neural networks for $(v,g)$:
\begin{align}
\mathcal{L}^{\wfr}_{k}(\theta) 
&= \frac{1}{2h}\int_{0}^{1}\mathbb{E}\Bigl[ \rho_{k}(\mathbb{R}^{d}) w_{t}(X_{k})\left(|v_{t}^\theta \circ\phi_{t}(X_{k})|^{2}
+\frac{1}{\alpha} (g_{t}^\theta \circ\phi_{t}(X_{k}))^2 \right)\Bigr]\,\d t  \\
&\quad +\mathbb{E}\Bigl[\rho_{k}(\R^d) \, w_{1}^\theta(X_{k})\Bigl(\log\rho_{k}(X_{k})+\log w_{1}^\theta(X_{k})+V\circ\phi_{1}(X_{k})-1\Bigr)\Bigr] \nonumber \\
&\quad -\int_{0}^{1}\mathbb{E}\Big[\rho_k(\R^d) w_{1}^\theta (X_{k}) (\div v_t)\circ \phi_t(X_{k})\Bigr]\,\mathrm{d}t. \label{eq:obj:WFR}
\end{align}
Likewise the final summand can be replaced by the Hutchinson estimator \eqref{eq:proposition:kl:WFR:5}.

 Given samples $X_{k}^{1},\ldots,X_{k}^{N}\in\mathbb{R}^{d}$ from the distribution $\rho_{k}(\mathbb{R}^{d})^{-1}\rho_{k}$ and independent samples $\xi_{1},\ldots,\xi_{N}\sim^{\mathrm{i.i.d.}}\mathrm{Rad}^{d}$, we approximate the population risk~\eqref{eq:obj:WFR} through an empirical risk 
\begin{align}
\widehat{\mathcal{L}}^{\wfr}_{k}(\theta) 
&= \frac{1}{2hN}\sum_{i=1}^{N}\int_{0}^{1}w_{t}^\theta(X_{k}^{i})\Bigl\{|v_{t}^\theta\circ\phi_{t}(X_{k}^{i})|^{2}+\frac{1}{\alpha}(g_{t}^\theta\circ\phi_{t}(X_{k}^{i}))^{2}\Bigr\}\,\mathrm{d}t \nonumber \\
&\quad +\frac{1}{N}\sum_{i=1}^{N}w_{1}(X_{k}^{i})\Bigl\{\log\rho_{k}(X_{k}^{i})+\log w_{1}^\theta(X_{k}^{i})+V\circ\phi_{1}(X_{k}^{i})-1\Bigr\} \nonumber \\
&\quad -\frac{1}{N}\sum_{i=1}^{N}\int_{0}^{1}
w_{1}^{\theta}(X_{k}^{i})\xi_{i}^{\top}\nabla v_{t}^\theta\circ\phi_{t}(X_{k}^{i})\xi_{i}\,\mathrm{d}t, \label{eq:obj:WFR:empirical}
\end{align}

\subsection{Algorithms}\label{sec:algorithms}

In order to simulate the WFR--JKO scheme, we start with a tractable latent measure $\rho_0$ which is usually a isometric Gaussian distribution $\mathcal N(0,\sigma_0^2 I)$, where $\sigma_0$ is the initial covariance which we consider as a hyperparameter. We draw initial samples $X_0^1,...,X_0^N$ from $\rho_0=\rho_0(\R^d)^{-1}\rho_0$. Then, we iteratively generate samples $X_k^1,...,X_k^N$ from $\rho_k(\R^d)^{-1}\rho_k$ and compute the corresponding densities $\rho_k(X_k^i)$ as well as the total mass $\rho_k(\R^d)$ by performing the following steps for $k=0,...,K$:
\begin{enumerate}
    \item We learn the velocity $v^k$ and reaction rate $g^k$ from the WRF--JKO scheme \eqref{eq:obj:WFR:vg} by parameterizing $v^k$ and $g^k$ as neural networks with parameters $\theta$ and minimizing the loss function $\widehat{\mathcal{L}}^{\wfr}_{k}(\theta)$ from \eqref{eq:obj:WFR:empirical}. We include the detailed computation of this loss function as pseudo-code in Algorithm \ref{alg:empirical:risk} in the appendix.
    \item Once the velocity and reaction are trained, we can draw weighted samples $Y_{k+1}^1,...,Y_{k+1}^N$ with weights $w_{k+1}^1,...,w_{k+1}^N$ from $\rho_{k+1}(\R^d)^{-1}\rho_{k+1}$ and the total mass $\rho_{k+1}(\R^d)$ by solving the flow ODE $\phi$ from \eqref{eq:flow_ode} and applying \eqref{eq:nu}. We outline this computation as pseudo-code in Algorithm \ref{alg:run:jko:step}.
    \item In order to obtain an ensemble of unweighted samples from $\rho_{k+1}(\R^d)^{-1}\rho_{k+1}$, we perform a resampling step as proposed in \cite{DDJ2006}. More precisely, we normalize the weights $w_{k+1}^1, ..., w_{k+1}^N$ and draw samples from the empirical distribution $\sum_{n=1}^Nw_{k+1}^n \delta_{Y_{k+1}^n}$. This step is outlined as pseudo-code in Algorithm \ref{alg:sample}.
\end{enumerate}
Repeating this procedure for $k=0,...,K$ provides us with samples $X_K^1,...,X_K^N$ from $\rho_k(\R^d)^{-1}\rho_k$ and the total mass $\rho_k(\R^d)$.

\section{Metropolis--Hastings Steps}\label{sec:MH}

\par The resampling step from of the section unavoidably causes particle degeneracy: low-weight particles are discarded and high-weight particles are duplicated, so the support of the ensemble collapses. To rejuvenate the ensemble while preserving its target, we apply one step of the Metropolis--Hastings (MH) algorithm, see, e.g., \cite{robertsrosenthalMCMC}, after each resampling step. As a stationary distribution we choose the distribution $\rho_k(\R^d)^{-1}\rho_k$ such that the overall distribution of the samples is not affected by the MH steps. In the following, we first revisit the MH algorithm and then focus on its application in the WFR--JKO scheme.

\subsection{Metropolis--Hastings Algorithm}

Let $\mathcal Q\colon\R^d\times\mathcal B(\R^d)\to[0,1]$ be a Markov kernel such that $\mathcal Q(x,\cdot)$ is absolutely continuous with non-zero density for all $x\in\R^d$ and let $\hat \rho\in\mathscr{P}(\R^d)$ be an absolutely continuous probability measure, which we call the target distribution. As before, we denote the densities of $\mathcal Q(x,\cdot)$ and $\hat \rho$ again by $\mathcal Q(x,y)$ and $\hat \rho(x)$.
Then the MH algorithm generates a sequence $(x_n)_n$ starting at $x_0\in\R^d$ using the following steps
\begin{enumerate}
    \item Draw $x'$ from $\mathcal Q(x_n,\cdot)$ and $u$ uniformly in $[0,1]$.
    \item Compute the acceptance probability $A(x_n,x')\coloneqq\min\left\{1,\frac{\hat\rho(x')\mathcal Q(x',x_n)}{\hat \rho(x_n)\mathcal Q(x_n,x')}\right\}.$
    \item Set
    $$
    x_{n+1}\coloneqq 
    \begin{cases}
    x',&\text{if }u<A(x_n,x'),\\
    x_n,&\text{otherwise.}
    \end{cases}
    $$
\end{enumerate}
The MH algorithm generates a time-homogeneous Markov chain with transition kernel $\mathcal K_\mathrm{MH}\colon\R^d\times\mathcal B(\R^d)\to[0,1]$ given by
$$
\mathcal K_\mathrm{MH}(x,B)=\int_A\mathcal Q(x,y)A(x,y)\d y+\delta_x(B)\int_{\R^d}\mathcal Q(x,y)(1-A(x,y))\d y.
$$
It can be shown that for any choice of $\mathcal Q$, the target distribution $\hat \rho$ is a stationary distribution of this Markov kernel in the sense that applying the kernel onto $\hat \rho$ yields again $\hat\rho$ or more precisely
$$
\hat \rho(B)=\int \mathcal K_\mathrm{MH}(x,B)\d\hat \rho(x),\quad \text{for any Borel set }B\in\mathcal B(\R^d).
$$

\subsection{Application to the WFR--JKO Scheme}

After each WFR--JKO step we are given an ensemble of samples $x_1,...,x_n$ which follow the distribution $\rho_k(\R^d)^{-1}\rho_k$. Further, we have computed the total mass $\rho_k(\R^d)$. However, due to the resampling step, some of these samples are located at the exact same position. Therefore, we apply one step of the MH algorithm on these samples. More precisely, we generate a new ensemble $y_1,...,y_n$ by sampling $y_i$ from $\mathcal K_\mathrm{MH}(x_i,\cdot)$. As a target distribution we use $\hat \rho\coloneqq \rho_k(\R^d)^{-1}\rho_k$. Since we know that the $x_i$ already follow the distribution $\hat \rho$ and since this is an stationary distribution of $\mathcal K_\mathrm{MH}$, we obtain that also the ensemble $y_1,...,y_n$ follows $\hat \rho$.
It is left to choose the Markov kernel $\mathcal Q$ from the MH algorithm. Here we consider the following choices.
\begin{enumerate}
    \item \textbf{Gaussian proposal:} One of the most standard choices in the MH algorithm is 
    \begin{equation}\label{eq:Gauss_proposal}
    \mathcal Q_\mathrm{Gauss}(x,\cdot)=\mathcal N(x,\sigma^2I_d),
    \end{equation} 
    where $\mathcal N(m,\Sigma)$ denotes the Gaussian distribution with mean $m$ and covariance matrix $\Sigma$ and $\mathcal N(\cdot;m,\Sigma)$ the corresponding density. The kernel has the denisty $\mathcal Q_\mathrm{Gauss}(x,x')=\mathcal N(x';x,\sigma^2I_d)$, which is symmetric such that the acceptance probability simplifies to $A(x,x')=\min\left\{1,\frac{\rho_k(x')}{\rho_k(x)}\right\}$.
    \item \textbf{Langevin proposal:} The MH algorithm is often combined with a Langevin sampling algorithm. This corresponds to applying one gradient descent step on the energy $-\log\hat\rho$ before adding Gaussian noise. 
    Since computing gradients of $-\log\hat\rho=-\log \rho_k$ is computational expensive, we replace the gradient of $-\log \rho_k$ here by $-\log \pi=\nabla V$, which can be computed directly and is motivated by the fact that $\rho_k$ is already close to $\pi$ for large $k$. This leads to the proposal distribution
    \begin{equation}\label{eq:Langevin_proposal}
    \mathcal Q_\mathrm{Langevin}(x,\cdot)=\mathcal N(x-\tau\nabla V(x),\sigma^2 I_d).
    \end{equation}
    The step size $\tau$ and the noise parameter $\sigma$ are often chosen as $\sigma^2=2\tau$ which corresponds to one step of the unadjusted Langevin algorithm. However, depending how closely $\rho_k$ approximates $\pi$ already also a choice of $\sigma^2>\tau$ can be useful.
    We obtain the density 
    $$
    \mathcal Q_\mathrm{Langevin}(x,x')=\mathcal N(x';x-\tau\nabla V(x),\sigma^2 I_d).
    $$
    \item \textbf{Proposal in the latent space:} The MH algorithm often suffers from low acceptance rates, whenever the target distribution is multimodal or somehow degenerated. Since our WFR--JKO algorithm usually starts at a very regular distribution $\rho_0$, we follow the idea to transform the samples from $\rho_k$ back to the latent space, perform the MH algorithm in the latent space with Gaussian proposal and transform the sample back to the data space. To make this procedure precise, we define the flow ODE $\phi\colon[0,k]\times\R^d\to\R^d$ by $\partial_t \phi_{l+t}(l+t,x)=v_t^l(\phi_{k+t}(k+t,x))$ with $\phi(0,x)=x$, where $v_t^l$ is the optimal velocity from the WFR--JKO step connecting $\rho_l$ and $\rho_{l+1}$. Then, we define the proposal distribution as
    \begin{equation}\label{eq:gauss_in_latent}
    \mathcal Q_\mathrm{latent}(x,B)=\mathcal Q_\mathrm{base}(\phi_k^{-1}(x),\phi_k^{-1}(B)),
    \end{equation}
    where $\mathcal Q_\mathrm{base}$ is some base Markov kernel. In our numerics we consider the cases, where it is given by the Gaussian proposal \eqref{eq:Gauss_proposal} or Langevin proposal \eqref{eq:Langevin_proposal} (with the Gaussian energy instead of $V$).
    We compute the density of $\mathcal Q_\mathrm{latent}$ in the next lemma.

\begin{lemma}
Let $v_t^l\colon[0,1]\times\R^d\to\R^d$ and $\partial_t \phi_{l+t}(x)=v_t^{l}\circ\phi_{l+t}(x)$ for $l=0,...,k-1$ and $t\in[0,1]$. Then, the transition kernel \eqref{eq:gauss_in_latent} has the density
$$
\mathcal Q_\mathrm{latent}(x,x')=\mathcal Q_\mathrm{base}(z,z')\exp\left(-\sum_{l=0}^{k-1}\int_0^1\div v_t^l(\phi_{l+t}(z'))\,\mathrm{d}t\right), 
$$
where $z'=\phi_k^{-1}(x')$ and $ z=\phi_k^{-1}(x)$.
\end{lemma}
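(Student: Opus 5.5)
The plan is to identify $\mathcal Q_\mathrm{latent}(x,\cdot)$ as a push-forward measure and then apply the change-of-variables formula, computing the Jacobian determinant of $\phi_k$ with the Liouville formula \eqref{eq:det}.

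\textbf{Step 1: push-forward form.} By \eqref{eq:gauss_in_latent}, for every Borel set $B$ we have $\mathcal Q_\mathrm{latent}(x,B)=\mathcal Q_\mathrm{base}(z,\phi_k^{-1}(B))$ with $z=\phi_k^{-1}(x)$. In other words, $\mathcal Q_\mathrm{latent}(x,\cdot)=(\phi_k)_\#\mathcal Q_\mathrm{base}(z,\cdot)$. Here $\phi_k$ is a $C^1$-diffeomorphism: it is the composition of the $k$ time-one flow maps $\psi^l:=\phi_{l+1}\circ\phi_l^{-1}$ of the velocities $v^l$. Each of these is a $C^1$-diffeomorphism by Proposition~\ref{prop:basic_w}, under the standing assumption \eqref{lip_v} on each $v^l$.

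\textbf{Step 2: change of variables.} Since $\mathcal Q_\mathrm{base}(z,\cdot)$ has a density, the standard formula for the density of a push-forward under a diffeomorphism gives
\[
\mathcal Q_\mathrm{latent}(x,x')=\mathcal Q_\mathrm{base}(z,z')\,\bigl|\det\nabla\phi_k(z')\bigr|^{-1},\qquad z'=\phi_k^{-1}(x').
\]

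\textbf{Step 3: the Jacobian determinant.} By the chain rule,
\[
\det\nabla\phi_k(z')=\prod_{l=0}^{k-1}\det\nabla\psi^l\bigl(\phi_l(z')\bigr).
\]
For each factor, Proposition~\ref{prop:basic_w}, applied to the flow of $v^l$ started at $y=\phi_l(z')$, gives
\[
\det\nabla\psi^l(y)=\exp\Bigl(\int_0^1\div v_t^l\bigl(\phi_{l+t}(z')\bigr)\,\mathrm{d}t\Bigr),
\]
because the trajectory of $v^l$ starting at $\phi_l(z')$ is exactly $t\mapsto\phi_{l+t}(z')$. This factor is positive, so the absolute value can be dropped. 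Taking the product turns the exponentials into the exponential of the sum. Inverting it and inserting into Step 2 produces the factor $\exp\bigl(-\sum_{l=0}^{k-1}\int_0^1\div v_t^l(\phi_{l+t}(z'))\,\mathrm{d}t\bigr)$, which is the claimed formula.

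\textbf{Main obstacle.} The only delicate point is bookkeeping rather than analysis. One must check that $\phi$ is defined consistently: piecewise on each interval $[l,l+1]$, with matching values at the integer times $l$. The definition in the text has typos in the indices, so this needs care. One must also make sure the Liouville formula is applied at the correct base point, $\phi_l(z')$ rather than $z'$, so that the composition along the trajectory yields exactly $\phi_{l+t}(z')$ in the integrand.
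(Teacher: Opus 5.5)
Your proof is correct and follows the same route as the paper: write $\mathcal Q_\mathrm{latent}(x,\cdot)$ as the push-forward of $\mathcal Q_\mathrm{base}(z,\cdot)$ under $\phi_k$, change variables to get the factor $|\det\nabla\phi_k(z')|^{-1}$, and evaluate this Jacobian with the Liouville formula \eqref{eq:det}. Your chain-rule decomposition of $\det\nabla\phi_k(z')$ into $k$ per-step factors, each evaluated at the base point $\phi_l(z')$, spells out the step that the paper compresses into ``the claim follows by \eqref{eq:det}''.
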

\begin{proof}
It holds
\begin{align}
\mathcal Q_\mathrm{latent}(x,B)=\mathcal Q_\mathrm{base}(z,\phi_k^{-1}(B))
&=\int_{\phi_k^{-1}(B)} \mathcal Q_\mathrm{base}(z,z')\d z'\\
&=\int_{B}\mathcal Q_\mathrm{base}(z,\phi_k^{-1}(x'))|\det \phi_k^{-1}(x')| \d x',
\end{align}
where we used the substitution $z'=\phi_k^{-1}(x')$.
In particular, $\mathcal Q_\mathrm{latent}(x,\cdot)$ has the density 
$$
\mathcal Q_\mathrm{latent}(x,x')=\mathcal Q_\mathrm{base}(z,\phi_k^{-1}(x'))|\det(\phi_k^{-1}(z'))|.
$$
where $z'=\phi_k^{-1}(x')$. Then the claim follows by \eqref{eq:det}.
\end{proof}
\end{enumerate}

\subsection{Density Evaluation of $\rho_k$}
For computing the acceptance probabilities in the previous subsection, we have to evaluate the densities $\rho_k$ at some arbitrary point $x\in\R^d$.
To this end we denote by $v^k$ and $g^k$ the optimal velocities and reactions from the WFR--JKO step connecting $\rho_k$ and $\rho_{k+1}$. We define the concatenations $\tilde v\colon [0,k]\times\R^d\to\R^d$ and $\tilde g\colon[0,k]\times\R^d\to\R$ for $t\in[k,k+1]$ by
$$
\tilde v_t(x)=v^k_{t-k}(x), \quad \tilde g_t(x)=g^k_{t-k}(x).
$$
Then, by definition, the unique solution of the continuity equation with reaction
$$
\partial_t \tilde \mu_t +\div (\tilde v_t\tilde \mu_t) = \tilde g_t\tilde \mu_t,\quad \tilde \mu_0=\rho_0
$$
fulfills $\tilde \mu_k=\rho_k$ for all $k$. Using the time transform $s=k-t$ we obtain that the time reversals $\mu_s=\tilde \mu_{k-s}$, $v_s= \tilde v_{k-s}$ and $g_s=\tilde g_{k-s}$ that
$$
\partial_s \mu_s - \div(v_{s}\mu_s)= -g_s \mu_s,\quad\text{with}\quad \mu_0=\rho_k\text{ and } \mu_k=\rho_0.
$$
In particular, we get by \eqref{eq:nu} for the reverse flow ODE 
\begin{equation*}
\partial_t \phi^{\leftarrow}(t,\cdot)=-v_t\circ\phi^{\leftarrow}(t,\cdot), \quad \phi^{\leftarrow}(0,\cdot)=\mathrm{id}
\end{equation*}
that
$$
\rho_0=\phi^{\leftarrow}(k,\cdot)_\#(w\rho_k),\quad w(x)=\exp\left(\int_0^k -g_s\circ\phi^{\leftarrow}(s,x)\d s\right).
$$
Using the change-of-variables formula for densities, this implies for $\phi^{\leftarrow}_s=\phi^{\leftarrow}(s,\cdot)$ that the corresponding densities fulfill
$\rho_0(\phi_k^{\leftarrow}(x))=\frac{\rho_k(x)}{\det(\nabla \phi_k(x))}$. Inserting \eqref{eq:det} this can be reformulated as
$$
\rho_k(x)=\rho_0(\phi_k^{\leftarrow}(x))\exp\left(\int_0^k g_s\circ\phi_{s}^{\leftarrow}(x)-\div v_s\circ\phi_s^{\leftarrow}(x)\d s\right).
$$

\section{Numerical Results}\label{sec:numerics}
In this section, we report our promising numerical results in comparison with existing algorithms. To this end, we first introduce the test distributions on which we evaluate our algorithm. Then, we specify the detailed implementation and comparisons. Finally, we present our results.

\subsection{Test Distributions}\label{sec:test_distributions}

Next, we implement our algorithms. To this end, we evaluate the WFR--JKO scheme on the same test distributions as in \cite{Hertrich2025Importance}:
\begin{itemize}
    \item[-] \textbf{Mustache:} 
    The two-dimensional log-density is given as $\log\mathcal{N}(0,\Sigma) \circ T$ with $\Sigma= [1, \sigma ; \sigma,1]$ and $T(x_1,x_2) = (x_1, (x_2-(x_1^2-1)^2))$. Note that $\mathrm{det}(\nabla T(x))=1$ for all $x$. In particular, we obtain directly by the transformation formula that the normalization constant is one.
    Depending on $\sigma\in[0,1)$ close to $1$, this probability distribution has very long and narrow tails making it hard for classical MCMC methods to sample them. In our experiments we use $\sigma=0.9$.
    \item[-] \textbf{Shifted 8 Modes:} A two-dimensional Gaussian mixture model with 8 equal weighted modes and covariance matrix $\num{1e-2} I$. The modes are placed in a circle with radius 1 and center $(-1,0)$. Due to the shifted center classical MCMC methods have difficulties to distribute the mass correctly onto the modes.
    \item[-] \textbf{Shifted 8 Peaky:} This is the same distribution as the shifted 8 Modes with the difference that we reduce the width of the modes to the covariance matrix $\num{5e-3} I$. Since the modes are disconnected, it becomes harder to sample from them.
    \item[-] \textbf{GMM-$d$:} A $d$-dimensional Gaussian mixture model with 10 equal weighted modes with covariance matrix $\num{1e-2} I$ and means drawn randomly from a uniform distribution on $[-1,1]^d$. This leads to a peaky high-dimensional and multimodal probability distribution which is hard to sample from.
\end{itemize}

\subsection{Implementation and Hyperparameters}

In practice, we start with a very small initial step size $h_0$ and then iteratively increase the step size in each step by choosing $h_{k+1}=\tau h_k$ for some growth constant $\tau>1$. In order to avoid overfitting and the accumulation of errors, we resample the ensemble of samples after each WFR JKO step. Moreover, we apply a Metropolis--Hastings step with Gaussian proposal after each JKO step.

For all test distributions, we start with a Gaussian distribution $\mathcal N(0,\sigma_0^2 I)$ for a problem dependent scaling parameter $\sigma_0$. To parameterize the velocity fields $v$ and reaction rates $g$, we use a ResNet with $6$ blocks, where each block consists out of a two-layer network with \texttt{SiLU} activation and $512$ hidden neurons.
In order to simulate and differentiate the ODE solutions, we use the \texttt{rk4} solver from \texttt{torchdiffeq} library \cite{C2018} with a step size of $0.1$. 
We report the problem dependent hyperparameters in Table \ref{table:hyper}.

\begin{table*}[htbp]
\centering
\scalebox{.8}{%
\begin{tabular}{@{}lccccccc@{}}
\toprule
Distribution & $d$ & $h_0$ & $\tau$ & $\alpha$ & MH step size & $K$ & $\sigma_0$ \\
\midrule
Mustache &
2 & 0.0025 & 4.0 & 1.0 & 0.03 & 20 & 1.5 
\\

shifted 8 modes &
2 & 0.0025 & 4.0 & 1.0& 0.03 & 20 & 1.5 
\\

shifted 8 peaky &
2 & 0.0025 & 4.0 & 1.0 & 0.03 & 20 & 1.5 
\\

GMM-10 &
10 & 0.0025 & 4.0 & 1.0 & 0.03 & 20 & 1.5  \\

GMM-20 &
20 & 0.0025 & 4.0 & 1.0 & 0.03 & 10 & 1.5 \\

GMM-50 &
50 & 0.0025 & 4.0 & 1.0 & 0.03 & 10 & 1.5  \\

GMM-100 &
100 & 0.0025 & 4.0 & 1.0 & 0.03 & 10 & 1.5  \\

\bottomrule
\end{tabular}%
}
\caption{Hyperparameters for the WFR JKO algorithm: $d$ is the dimension of the distribution, $h_0$ is the initial step size in the WFR--JKO algorithm, $\tau$ is the growth of this step size, $\alpha$ is the parameter from the WFR metric, $K$ the number of WFR--JKO steps used, and $\sigma_0$ is the scaling of the initial distribution. In addition, we report the step sizes for solving the ODEs and for generating the proposals in the MH step.}

\label{table:hyper}
\end{table*}

\subsection{Comparisons}

We compare our methods with all comparison methods considered in \cite{Hertrich2025Importance}. Despite the neural Wasserstein JKO scheme (neural JKO) and its importance-corrected version (neural JKO IC) from \cite{Hertrich2025Importance} this includes classical Monte Carlo samplers like a Metropolis adjusted Langevin sampling (MALA) and Hamiltonian Monte Carlo (HMC), see e.g., \cite{B2017,RT1996} and two deep-learning based sampling algorithms, namely denoising diffusion samplers (DDS, \cite{VGD2023}) and continual repeated annealed flow transport Monte Carlo (CRAFT, \cite{MARD2022}).

\subsection{Results}

In order to evaluate the quality of the generated samples, we compute the energy distance \cite{S2002} between 50000 generated and ground truth samples, which corresponds to the maximum mean discrepancy with respect to the negative distance kernel $k(x,y)=-\|x-y\|$.
Given generated samples $\mathbf{X}=(X^1,...,X^N)$ and ground truth samples $\mathbf{Y}=(Y^1,...,Y^N)$, it is given by 
$$
\mathcal E(\mathbf{X},\mathbf{Y})=\frac{1}{N^2}\sum_{i,j=1}^N \|X^i-Y^j\|-\frac1{2N^2}\sum_{i,j=1}^N\|X^i-X^j\|-\frac1{2N^2}\sum_{i,j=1}^N\|Y^i-Y^j\|.
$$
The energy distance is a metric on the space of probability measures. In particular, a smaller energy distance indicates a better result.
The results are given in Table~\ref{table:comparisons}. Moreover, we plot two-dimensional marginals of generated samples in Figure~\ref{fig:table1}.

We observe that the WFR--JKO sampler is always either the best or second best method among the comparisons. Just the neural JKO IC method is competitive. This is expected, as the WFR JKO scheme formalizes the heuristic approach of ``transporting and reweighting'' from neural JKO IC and provides it with better theoretical guarantees. In addition, we note that the runtime of neural JKO IC depends exponentially on the number of resampling steps, see \cite[Remark 4.5]{Hertrich2025Importance}, while the WFR--JKO does not have this exponential behavior. All other methods degenerate for high dimensional multimodal target distributions.

\begin{table*}[htbp]
\centering
\resizebox{\textwidth}{!}{%
\begin{tabular}{@{}lcccccccc@{}}
\toprule
Distribution & MALA & HMC & DDS & CRAFT & Neural JKO
& Neural JKO IC & WFR JKO (ours) & Sampling Error \\
\midrule
Mustache
& $4.6 \times 10^{-2}$
& $1.7 \times 10^{-2}$
& $6.9 \times 10^{-2}$
& $9.2 \times 10^{-2}$
& $1.8 \times 10^{-2}$
& $\best 2.9 \times 10^{-3}$
& $\second 1.2 \times 10^{-2}$
& $8.6 \times 10^{-5}$ \\

shifted 8 modes
& $5.3 \times 10^{-3}$
& $4.1 \times 10^{-5}$
& $1.2 \times 10^{-2}$
& $5.2 \times 10^{-2}$
& $1.3 \times 10^{-1}$
& $\best 1.2 \times 10^{-5}$
& $\second 1.6 \times 10^{-5}$
& $2.6 \times 10^{-5}$ \\

shifted 8 peaky
& $1.3 \times 10^{-1}$
& $1.2 \times 10^{-1}$
& $1.1 \times 10^{-2}$
& $5.2 \times 10^{-2}$
& $1.3 \times 10^{-1}$
& $\second 3.4 \times 10^{-5}$
& $\best 3.1 \times 10^{-5}$
& $2.4 \times 10^{-5}$ \\

GMM-10
& $1.2 \times 10^{-2}$
& $1.2 \times 10^{-2}$
& $3.7 \times 10^{-3}$
& $1.8 \times 10^{-1}$
& $1.1 \times 10^{-2}$
& $\second 5.3 \times 10^{-5}$
& $\best 3.2 \times 10^{-5}$
& $4.6 \times 10^{-5}$ \\

GMM-20
& $9.1 \times 10^{-3}$
& $9.1 \times 10^{-3}$
& $5.0 \times 10^{-3}$
& $5.4 \times 10^{-1}$
& $1.0 \times 10^{-2}$
& $\second 1.1 \times 10^{-4}$
& $\best 1.0 \times 10^{-4}$
& $6.4 \times 10^{-5}$ \\

GMM-50
& $2.4 \times 10^{-2}$
& $2.4 \times 10^{-2}$
& $2.3 \times 10^{-2}$
& $1.8 \times 10^{0}$
& $2.7 \times 10^{-2}$
& $\best 1.0 \times 10^{-4}$
& $\second 4.1 \times 10^{-4}$
& $1.1 \times 10^{-4}$ \\

GMM-100
& $3.6 \times 10^{-2}$
& $3.7 \times 10^{-2}$
& $3.9 \times 10^{-2}$
& $2.8 \times 10^{1}$
& $4.7 \times 10^{-2}$
& $\best 6.0 \times 10^{-4}$
& $\second 1.7 \times 10^{-2}$
& $1.5 \times 10^{-4}$ \\

\bottomrule
\end{tabular}%
}
\caption{Energy distance between generated samples and ground truth samples. A smaller value indicates a better result. The best result for each test distribution is marked in green, the second-best is marked in blue.}
\label{table:comparisons}
\end{table*}

\begin{figure}[htbp]
\centering
\includegraphics[width=0.750\linewidth]{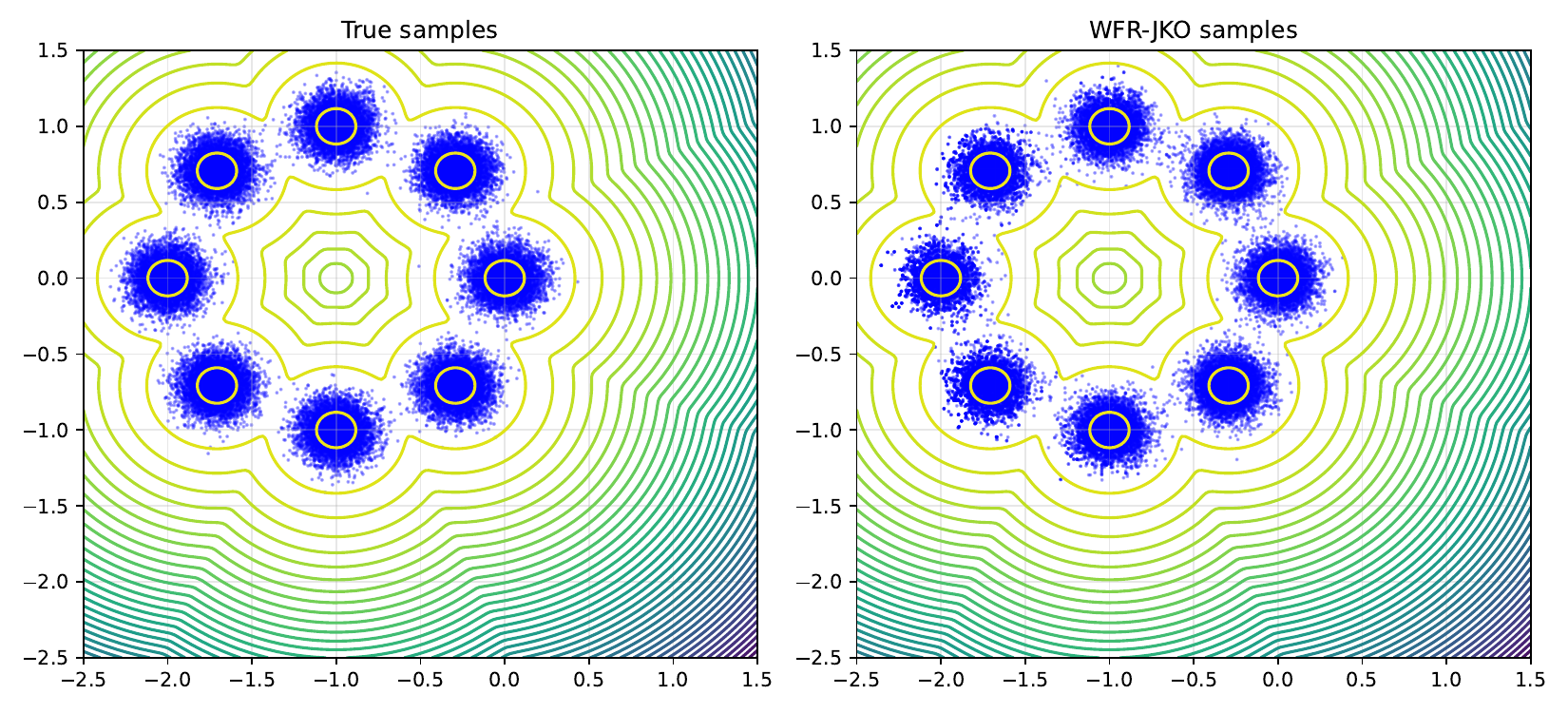}
\includegraphics[width=0.750\linewidth]{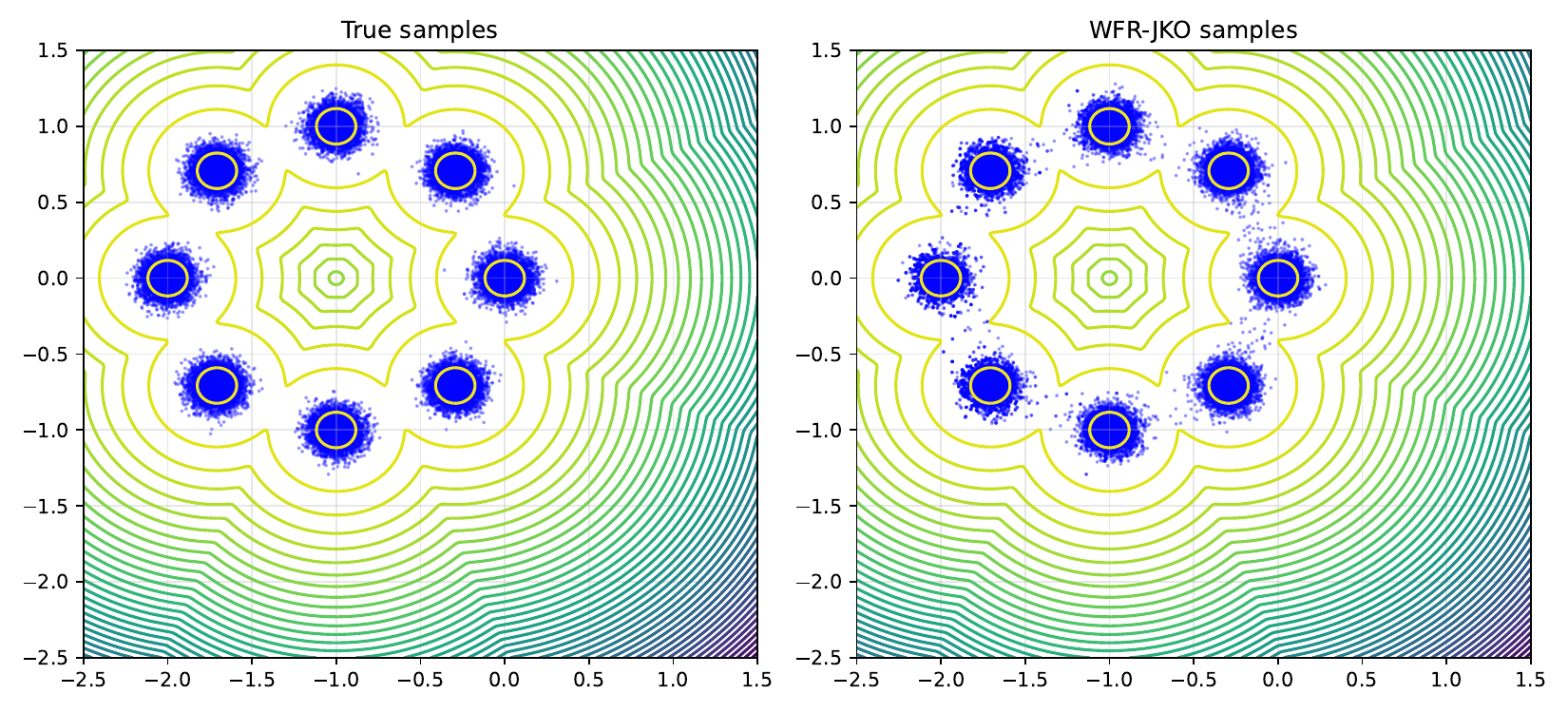}
\includegraphics[width=0.750\linewidth]{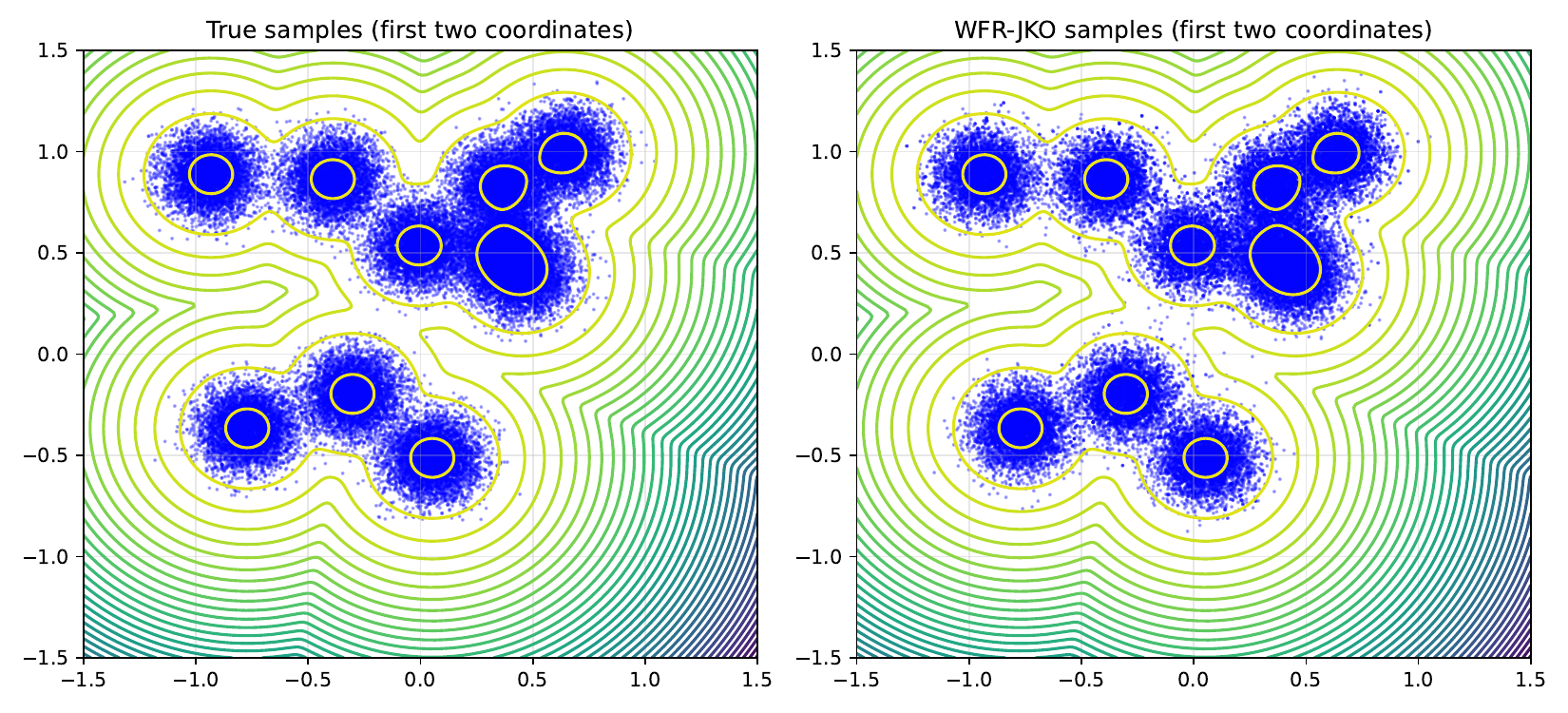}
\includegraphics[width=0.750\linewidth]{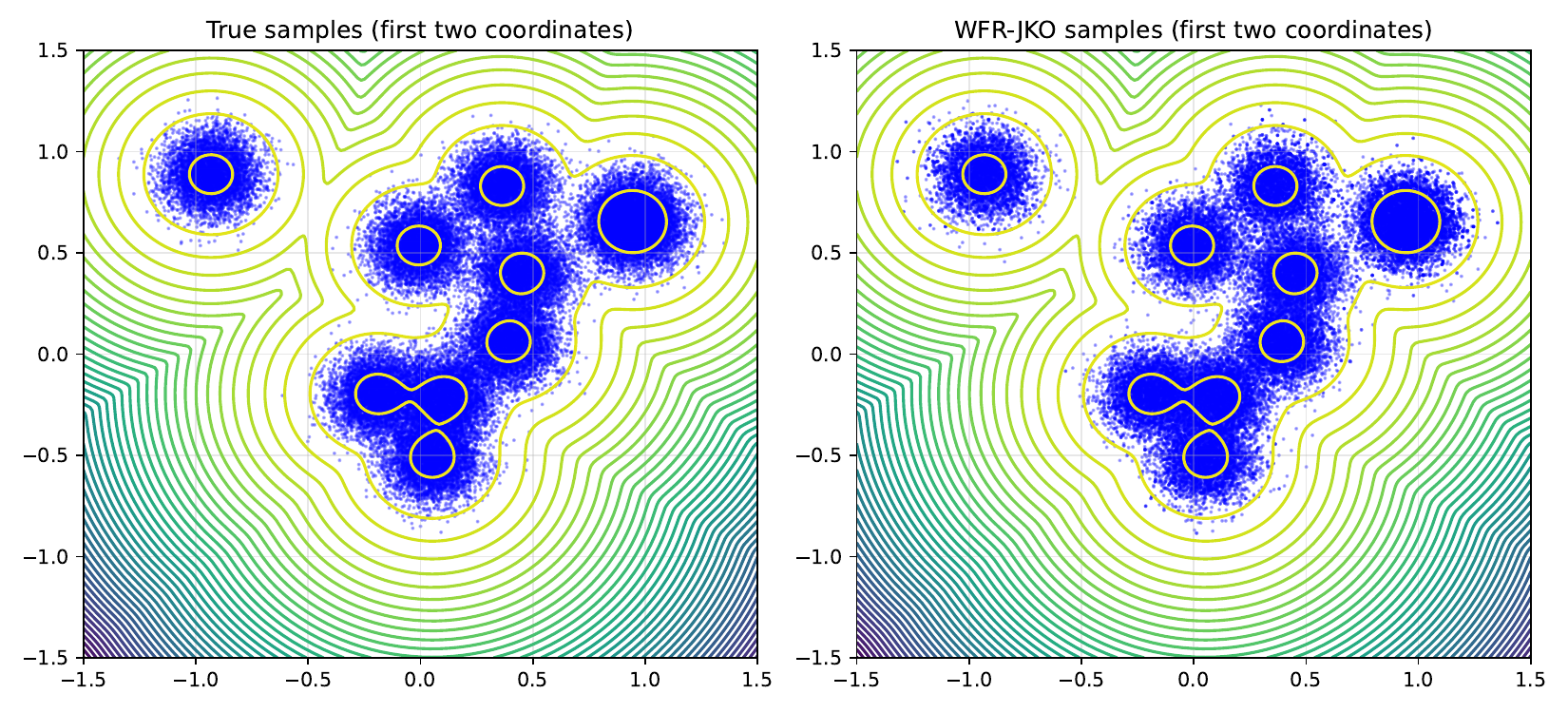}
\caption{
Plot of the first two coordinates of generated and ground truth samples for different examples.
\textbf{First line:} shifted 8 modes; \textbf{second line:} shifted 8 peaky; \textbf{third line:} GMM-10; \textbf{fourth line:} GMM-20.}
\label{fig:table1}
\end{figure}

\subsection*{LLM Usage}

We used Claude Fable for exploring the literature corresponding to continuous time gradient flows in the Wasserstein--Fisher--Rao metric. 
We used different LLMs for improving the implementations. 

\subsection*{Acknowledgments}
CD acknowledges funding by Deutsche Forschungsgemeinschaft (DFG, German Research Foundation) -- project number  442047500/SFB 1481 \emph{Sparsity and Singular Structures}. JH acknowledges funding by Deutsche Forschungsgemeinschaft (DFG, German Research Foundation), Project 572825596.
GS acknowledges funding by Deutsche Forschungsgemeinschaft (DFG, German Research Foundation),
Project STE 571/23-1.

\bibliography{reference}

\newpage
\appendix

\section{Auxiliary Proofs} \label{app_a}

\begin{proposition}
Let $r>0$ and $\pi\in\mathscr{M}(\R^d)$. Then, the set 
$\mathcal K_r \coloneqq \{\rho\in\mathscr{M}(\R^d):\kl(\rho\|\pi)\leq r\}$ is weakly sequentially compact.
\end{proposition}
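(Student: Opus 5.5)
Given a sequence $(\rho_n)_n\subset\mathcal K_r$, I will show that it is bounded in total mass and tight. Prokhorov's theorem for finite positive measures then yields a weakly convergent subsequence $\rho_{n_j}\weakly\rho\in\mathscr{M}(\R^d)$. Finally, weak lower semicontinuity of $\kl(\cdot\|\pi)$ gives $\kl(\rho\|\pi)\le\liminf_j \kl(\rho_{n_j}\|\pi)\le r$, so $\rho\in\mathcal K_r$. If $\pi=0$, then $\mathcal K_r=\{0\}$ and there is nothing to prove, so assume $\pi(\R^d)>0$.

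\emph{Mass bound and tightness.} Write $\rho=f\pi$, which is possible since $\kl(\rho\|\pi)<\infty$, and set $\varphi(r)=r\log r-r+1\ge 0$, so that $\kl(\rho\|\pi)=\int\varphi(f)\,\d\pi$. The key tool is the Young-type inequality $ab\le \varphi(a)+(e^{b}-1)$ for $a\ge 0$ and $b\in\R$, which is the Legendre duality of $\varphi$. For a Borel set $B$ and $\lambda>0$, applying it with $b=\lambda\mathbf 1_B$ and integrating against $\pi$ gives
$$\lambda\,\rho(B)\le \kl(\rho\|\pi)+(e^{\lambda}-1)\,\pi(B)\le r+(e^{\lambda}-1)\,\pi(B).$$
Taking $B=\R^d$ and $\lambda=1$ yields $\sup_n\rho_n(\R^d)\le r+(e-1)\pi(\R^d)<\infty$. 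For tightness, fix $\varepsilon>0$ and choose $\lambda=2r/\varepsilon$. Since $\pi$ is a finite Borel measure on $\R^d$, it is tight, so there is a compact set $K$ with $(e^{\lambda}-1)\pi(\R^d\setminus K)\le r$. Applying the inequality with $B=\R^d\setminus K$ then gives $\rho_n(\R^d\setminus K)\le 2r/\lambda=\varepsilon$ for all $n$.

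\emph{Lower semicontinuity.} I would use the Donsker--Varadhan-type dual representation for the extended KL divergence,
$$\kl(\rho\|\pi)=\sup_{\psi\in C_b(\R^d)}\Bigl\{\int\psi\,\d\rho-\int (e^{\psi}-1)\,\d\pi\Bigr\}.$$
For fixed $\psi$, the map $\rho\mapsto\int\psi\,\d\rho$ is weakly continuous, so the supremum is weakly lower semicontinuous. The inequality ``$\ge$'' follows from the Young inequality above, which also holds for $\rho\not\ll\pi$ because the left-hand side is then $+\infty$.

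\emph{Main obstacle.} The main obstacle is the reverse inequality ``$\le$''. I would first take the pointwise choice $\psi=\log f$, truncated to $[-M,M]$; for $\rho\not\ll\pi$ I would instead use $\psi=M\mathbf 1_N$ on a $\pi$-null set $N$ with $\rho(N)>0$. Then I would approximate these bounded Borel functions by continuous bounded ones via Lusin's theorem applied to the finite measure $\rho+\pi$, and finally let $M\to\infty$ using monotone convergence. Alternatively, one can simply cite the standard lower semicontinuity of relative entropy for finite measures, as in Ambrosio--Gigli--Savar\'e or Liero--Mielke--Savar\'e, since weak lower semicontinuity of KL is also invoked without proof in Lemma~\ref{lem:exist}.
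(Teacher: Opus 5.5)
Your proposal is correct and follows essentially the same route as the paper: a uniform mass bound and tightness from the duality inequality (with a constant test function and a large multiple of $\mathbf 1_{\R^d\setminus K}$), then Prokhorov's theorem and weak lower semicontinuity of $\kl(\cdot\|\pi)$. Your pointwise Young inequality $ab\le\varphi(a)+e^{b}-1$ is slightly cleaner than the paper's version. It directly justifies the discontinuous test function, whereas the paper applies a supremum over $C_b(\R^d)$ to $a\mathbf 1_{K^c}$ and only asserts the lower semicontinuity that you sketch.
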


\begin{proof} Recalling the convex duality
$x \log x -x+1 = \sup_{a \in \R} \{ax - e^a +1\}$, the KL divergence can be rewritten as
$$
\kl(\rho\|\pi) = \sup_{f \in C_b(\R^d)} \int f \, \d \rho - \int e^f - 1 \, \d \pi.
$$
Hence, we get, for $\rho\in\mathcal K_r$, that
\begin{equation}\label{A2}
\int_{\mathbb R^d} f\,d\rho
\leq
r+\int_{\mathbb R^d}(e^f-1)\,d\pi
\qquad\text{for every }f\in C_b(\mathbb R^d).
\end{equation}
Taking $f\equiv 1$  gives
\[
\rho(\mathbb R^d)
\leq
r+(e-1)\pi(\mathbb R^d),
\]
and consequently
$
\sup_{\rho\in\mathcal K_r}\rho(\mathbb R^d) \le C <\infty$.

Next, we show that $\mathcal K_c$ is uniformly tight. Let
$\varepsilon>0$ be arbitary small. Choose $a>0$ sufficiently large such that
\[
\frac{r}{a}<\frac{\varepsilon}{2}.
\]
Since $\pi$ is a finite measure on $\mathbb R^d$, it is tight. Thus,
there exists a compact set $K\subset\mathbb R^d$ such that
\[
\frac{e^a-1}{a}\pi(K^c)<\frac{\varepsilon}{2}.
\tag{A.4}
\]
Using the
measurable function $f=a\mathbf 1_{K^c}$, we obtain, for every
$\rho\in\mathcal K_r$, that
\[
r
\geq
a\rho(K^c)-(e^a-1)\pi(K^c).
\]
This implies
\[
\rho(K^c)
\leq
\frac{c}{a}
+
\frac{e^a-1}{a}\pi(K^c)
<\varepsilon
\]
and then
\[
\sup_{\rho\in\mathcal K_c}\rho(K^c)<\varepsilon,
\]
so that $\mathcal K_c$ is uniformly tight.
Now let $(\rho_n)_{n\geq1}$ be any sequence in $\mathcal K_c$. By
Prokhorov's theorem it follows that
there exists a subsequence, still denoted by $(\rho_n)$,
and some $\rho\in\mathcal M_+(\mathbb R^d)$ such that
$
\rho_n\rightharpoonup\rho
$.

It remains to verify that $\rho\in\mathcal K_r$. Since $\kl$ is
weakly lower semicontinuouswe obtain by 
$\rho_n\in\mathcal K_r$ for every $n$ that
\[
\operatorname{KL}(\rho\|\pi)
\leq
\liminf_{n\to\infty}
\operatorname{KL}(\rho_n\|\pi)
\leq r.
\]
Thus $\rho\in\mathcal K_r$ which finishes the proof.
\end{proof}

\section{Pseudo-code of the Algorithms}

We provide pseudo-code for the computations in Section \ref{sec:algorithms} in Algorithm \ref{alg:empirical:risk}, \ref{alg:run:jko:step} and \ref{alg:sample}.

\begin{algorithm}[htbp]\label{alg:empirical:risk}
\caption{Evaluate the empirical risk}
\KwIn{particles $X_{k}^{i}$ with log-density $\log\rho_k(X_k^i)$, velocity $v^k$, reaction $g^k$ and log-mass $\rho_{k}(\mathbb{R}^{d})$
}
\KwOut{The empirical risk $\widehat{\mathcal{L}}^{\wfr}_{k}(\theta)$}
Sample i.i.d. $d$-dimensional Rademacher variables $\xi_{1},\ldots,\xi_{N}\sim\mathrm{Rad}^{d}$. \\
Simulate the ODE system from $t=0$ to $t=1$ for $i=1,\ldots,N$:
\begin{align*}
\frac{\mathrm{d}}{\mathrm{d}t}\phi_{t}(X_{k}^{i}) &= v_{t}\circ\phi_{t}(X_{k}^{i}), \quad \phi_{0}(X_{k}^{i})=X_{k}^{i}, \\
\frac{\mathrm{d}}{\mathrm{d}t}\psi_{t}(X_{k}^{i}) &= g_{t}\circ\phi_{t}(X_{k}^{i}), \quad \psi_{0}(X_{k}^{i})=\log\rho_{k}(\mathbb{R}^{d}), \quad w_{t}(X_{k}^{i}) = \exp(\psi_{t}(X_{k}^{i})), \\
\frac{\mathrm{d}}{\mathrm{d}t}a_{t}(X_{k}^{i}) &= w_{t}(X_{k}^{i})\Bigl\{\|v_{t}\circ\phi_{t}(X_{k}^{i})\|_{2}^{2}+\frac{1}{\lambda}(g_{t}\circ\phi_{t}(X_{k}^{i}))^{2}\Bigr\}, \quad a_{0}(X_{k}^{i})=0, \\
\frac{\mathrm{d}}{\mathrm{d}t}\eta_{t}(X_{k}^{i})& = \xi_{i}^{\top}\nabla v_{t}\circ\phi_{t}(X_{k}^{i})\xi_{i}, \quad \eta_{t}(X_{k}^{i})=0.
\end{align*} \\
Compute the per-particle loss:
\begin{align*}
\widehat{\mathcal{L}}_{\wfr}^{k+1}(\theta;X_{k}^{i}) 
&\leftarrow \frac{1}{2h}a_{1}(X_{k}^{i})-w_{1}(X_{k}^{i})\eta_{1}(X_{k}^{i}) \\
&\quad +w_{1}(X_{k}^{i})\Bigl\{\log w_{1}(X_{k}^{i})+V\circ\phi_{1}(X_{k}^{i})+\log\rho_{k}(X_{k}^{i})-\log\rho_{k}(\mathbb{R}^{d})-1\Bigr\}.
\end{align*} \\
Compute the empirical risk 
$\widehat{\mathcal{L}}_{\wfr}^{k}(\theta) \leftarrow \frac{1}{N}\sum_{i=1}^{N}\widehat{\mathcal{L}}_{\wfr}^{k+1}(\theta;X_{k}^{i})$. \\
\Return{$\widehat{\mathcal{L}}_{\wfr}^{k+1}(\theta)$}
\end{algorithm}

\begin{algorithm}[htbp]\label{alg:run:jko:step}
\caption{Applying a JKO step}
\KwIn{particles $X_{k}^{i}$, log-densities $\log\rho_{k}(X_{k}^{1})$, velocity $v^k$ and reaction $g^k$}
\KwOut{weighted particles $Y_{k+1}^i$ with weights $w_{k+1}^i$ and log-densities $\rho_{k+1}(Y_{k+1}^i)$, log-mass $\log\rho_{k+1}(\mathbb{R}^{d})$}
Simulate the ODE system from $t=0$ to $t=1$ for $i=1,\ldots,N$:
\begin{align*}
\frac{\mathrm{d}}{\mathrm{d}t}\phi_{t}(X_{k}^{i}) &= v_{t}\circ\phi_{t}(X_{k}^{i}), \quad \phi_{0}(X_{k}^{i})=X_{k}^{i}, \\
\frac{\mathrm{d}}{\mathrm{d}t}\psi_{t}(X_{k}^{i}) &= g_{t}\circ\phi_{t}(X_{k}^{i}), \quad \psi_{0}(X_{k}^{i})=\log\rho_{k}(\mathbb{R}^{d}), \\
\frac{\mathrm{d}}{\mathrm{d}t}\zeta_{t}(X_{k}^{i}) &= -\div v_{t}\circ\phi_{t}(X_{k}^{i}), \quad \zeta_{0}(X_{k}^{i})=\log\rho_{k}(X_{k}^{i}),
\end{align*} \\
Update the particles $Y_{k+1}^{i}\leftarrow\phi_{1}(X_{k}^{i})$ for $i=1,\ldots,N$. \\
Evaluate the weights $w_{k+1}^{i} \leftarrow \exp(\psi_{1}(X_{k}^{i}))$ for $i=1,\ldots,N$. \\
Evaluate the log-density $\log\rho_{k+1}(Y_{k+1}^{i})\leftarrow \psi_{1}(X_{k}^{i})+\zeta_{1}(X_{k}^{i})$. \\
Evaluate the log-mass $\log\rho_{k+1}(\mathbb{R}^{d})\leftarrow \frac{1}{N}\sum_{i=1}^{N}w_{k+1}^{i}$. \\
\Return{samples $Y_{k+1}^{i}$, weights $w_i$, log-density $\log\rho_{k+1}(Y_{k+1}^{i}))$ and total mass $\rho_{k+1}(\mathbb{R}^{d})$.}
\end{algorithm}

\begin{algorithm}[htbp]\label{alg:sample}
\caption{Resampling}
\KwIn{weighted particles $Y_{k+1}^i$ with weights $w_{k+1}^i$ and log-densities $\rho_{k+1}(Y_{k+1}^i)$}
\KwOut{unweighted particles $X_{k+1}^i$ with log-densities $\rho_{k+1}(X_{k+1}^i)$}
Normalize the weights $(\widetilde{w}_{k+1}^{1},\ldots,\widetilde{w}_{k+1}^{N}) \leftarrow \mathrm{softmax}(w_{k+1}^{1},\ldots,w_{k+1}^{N})$. \\
Resample $(j_{1},\ldots,j_{N})\sim\mathrm{Multinormial}(\widetilde{w}_{k+1}^{1},\ldots,\widetilde{w}_{k+1}^{N})$. \\
$X_{k+1}^{i}\leftarrow X_{k+1}^{j_{i}}$ for $i=1,\ldots,N$. \\
$\log\rho_{k+1}(X_{k+1}^{i})\leftarrow \log\rho_{k+1}(X_{k+1}^{j_{i}})$ for $i=1,\ldots,N$. \\
\Return{$(X_{k+1}^{i},\log\rho_{k+1}(X_{k+1}^{i}))$ and $\rho_{k+1}(\mathbb{R}^{d})$.}
\end{algorithm}

\end{document}